\date{}
\theoremstyle{plain}
\newtheorem{theorem}{Theorem}
\newtheorem{corollary}{Corollary}
\newtheorem{proposition}{Proposition}
\newtheorem{lemma}{Lemma}
\newtheorem{rem}{Remark}
\theoremstyle{definition}
\theoremstyle{remark}
 \newcommand{\IN}[0]{\mathbb{N}}
 \newcommand{\CL}[0]{\mathcal{L}}
\title{Arborescence of positive Thompson links}
\author{Valeriano Aiello}
\address{Valeriano Aiello,
Mathematisches Institut, Universit\"at Bern,  Alpeneggstrasse 22, 3012 Bern, Switzerland
}\email{valerianoaiello@gmail.com}
\author{Sebastian Baader} 
\address{Sebastian Baader,
Mathematisches Institut, Universit\"at Bern, Sidlerstrasse 5, 3012 Bern, Switzerland
}\email{sebastian.baader@math.unibe.ch}
\begin{document}

\begin{abstract} 
We show that the links associated with positive elements of the Thompson group $F$ coincide with the closures of bipartite arborescent tangles.
\end{abstract}

\maketitle
\section{Introduction}

The Thompson group  $F$  shares two important features with the union of the braid groups: it contains a natural positive monoid $F_+$ generated by countably many generators $x_0,x_1,x_2,\ldots$, and it comes with a link construction, recently described by Jones~\cite{J1,J2}. The elements of the Thompson group can be encoded by pairs of finite rooted binary plane trees. As in the braid groups, every group element of the Thompson group is a product of a positive and a negative element, each of which is determined by a single tree~\cite{CFP}. We refer to links associated with positive elements of the Thompson group as positive Thompson links. 
In \cite[Problem 6.16]{GS} Golan and Sapir asked what types of links correspond to $F_+$.
As we will see, these links are all arborescent. Moreover, their underlying rooted plane trees are bipartite, in the following sense: their vertices carry weights $\pm 1$, so that all pairs of neighbouring vertices carry different signs, and so that all vertices with positive sign have valency two.

\begin{theorem} \label{arborescent}
The set of positive Thompson links coincides with the set of closures of bipartite arborescent tangles.
\end{theorem}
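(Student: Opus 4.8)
The plan is to compare the two families diagram by diagram, through an explicit dictionary between rooted binary plane trees $T$ and bipartite weighted plane trees $S$, together with an induction on the number of carets of $T$.

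First I would unwind Jones's construction on the positive monoid. A positive element is encoded by a single rooted binary plane tree $T$ (the second tree being the all-right comb), and the associated link $\mathcal{L}(T)$ carries a canonical planar diagram $D(T)$: superimpose $T$ with its reflection across a horizontal axis, identify the $n$ leaves in pairs along that axis, obtain a $4$-valent plane graph $G(T)$, and resolve each vertex by the standard medial/smoothing rule, oriented so that every crossing is positive. The structural features I want to extract are that the complementary regions of $G(T)$ admit a checkerboard $2$-colouring in which all crossings have the same rotation sense, so that $D(T)$ is a connected special alternating diagram and one Tait graph $\Gamma(T)$ of $\mathcal{L}(T)$ is planar and read directly off $T$: its vertices are the maximal chains of equally-leaning carets of $T$, and its edges are the carets where the leaning direction changes. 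Subdividing each edge of $\Gamma(T)$ once, marking old vertices $-$ and new vertices $+$, produces the candidate bipartite plane tree $S(T)$.

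For the inclusion ``positive Thompson $\subseteq$ closures of bipartite arborescent tangles'' I would induct on the number of carets. The root caret splits $T$ into a left subtree $T_{\ell}$ and a right subtree $T_{r}$, and correspondingly $D(T)$ is assembled from the tangles associated with $T_{\ell}$ and $T_{r}$ by a single Conway tangle operation --- a sum or a product, the choice alternating with the leaning direction of the caret --- together with the one crossing contributed by the root caret. By induction this displays $D(T)$ as the closure of an arborescent tangle whose plane tree is $S(T)$, and one then verifies the required shape: each caret contributes exactly one crossing, hence exactly a $\pm 1$ twist region, forcing all weights to be $\pm 1$; a maximal chain of equally-leaning carets assembles into a string of valency-two vertices, a change of leaning direction is a branch point of arbitrary valency, and the alternation of the sum/product pattern is precisely the alternation of the $\pm$ signs. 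I would include the smallest cases as sanity checks, e.g.\ $x_0 \mapsto$ Hopf link, the closure of a single weight-$1$ vertex.

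For the reverse inclusion I would invert the dictionary: from a bipartite arborescent tangle with plane tree $S$ --- alternating signs, positive vertices of valency two, weights $\pm 1$ --- build $T$ by turning each positive valency-two vertex into one caret and each negative vertex of valency $d$ into a chain of $d-1$ equally-leaning carets, nesting left or right according to the sign and threading the cyclic order of the subtrees around each negative vertex into the left--right nesting, rooted at the root of $S$; one checks $S(T)=S$, so $\mathcal{L}(T)$ is the given closure. The main obstacle is the precise bookkeeping in the inductive step: showing that Jones's smoothing really cuts $D(T)$ into the arborescent pieces with no spurious twisting, so that positive vertices come out with valency exactly two rather than being absorbed into neighbouring twist regions, that all weights come out $\pm 1$, and that every crossing has the correct sign. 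This reduces to pinning down the convention-dependent dictionary between ``a caret leans the same way as its parent'' and ``Conway product versus sum'', and to checking that the closure performed at the root (numerator versus denominator) matches the Thompson closure. A secondary difficulty is that arborescent presentations are non-unique under the standard moves on plumbing trees, so ``bipartite'' has to be identified as a normal form reachable from whatever presentation the induction naturally produces, with the degenerate low-complexity cases handled separately.
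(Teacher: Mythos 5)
Your global strategy --- an explicit dictionary between rooted binary plane trees and bipartite weighted plane trees, established by induction on the number of carets and then inverted for the reverse inclusion --- is the same as the paper's, and your qualitative picture (same-leaning carets extend a chain of valency-two vertices, a change of leaning direction creates a branch point) is indeed the content of Lemma~\ref{addcarets}. The dictionary you actually write down is wrong, however. You only account for the crossings of the top tangle: the bottom tree of a positive element is the right vine with $n$ leaves, and its $n-1$ carets contribute another $n-1$ crossings, so the Jones diagram of $\CL(T)$ has $2(n-1)$ crossings in total. In the paper's correspondence each caret of $T$ therefore produces \emph{two} vertices of the weighted tree, one of weight $+1$ (a crossing of the top tangle) and one of weight $-1$ (a crossing of the bottom tangle); collapsing a maximal chain of $k$ equally-leaning carets to a single vertex of weight $-1$ discards crossings and cannot reproduce $\CL(T)$. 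Your structural claim that $D(T)$ is a connected special alternating diagram with a Tait graph equal to the tree $\Gamma(T)$ is also false: a Tait graph of a connected diagram with $c$ crossings has exactly $c$ edges, while your $\Gamma(T)$ has at most $n-2$; and the figure-eight knot is a positive Thompson knot (Figure~\ref{fig2}) but is not special alternating, being neither a positive nor a negative link. (Your sanity check points the same way: the closure of a single weight-$1$ vertex is an unknot, and the one-caret tree yields the two-component unlink, which is precisely why the paper's base case consists of two adjacent vertices of weight $0$.)

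Two further ingredients are needed and absent from your sketch. First, the inductive step cannot simply be run by splitting at the root caret: the bottom vine nests globally over all $n$ leaves and does not decompose into the bottom tangles of $T_{\ell}$ and $T_{r}$, so the claimed Conway sum/product decomposition of $D(T)$ requires an isotopy you have not supplied. The paper instead adds one caret at a time at a leaf and distinguishes three cases (below a left leaf, a right inner leaf, the rightmost leaf), which is exactly where the interaction with the bottom vine is controlled. Second, the induction unavoidably produces a weighted tree whose root and one adjacent vertex have weight $0$, which is not a bipartite tree in the sense of the statement; trading these weight-zero vertices for a root of weight $-1$, and back again for the reverse inclusion, requires the cancellation move of Lemma~\ref{lemma2} together with a preliminary modification of the pair of binary trees. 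Your proposal does not engage with either point, and both are where the actual work of the proof lies.
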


Arborescent links are described by rooted plane trees with integer weights~\cite{BS}; they are traditionally also called algebraic \cite{T}.
 Our convention is such that alternating tangles 
 correspond to trees all of whose weights
 have the same sign. This is compatible with the usual convention for rational tangles~\cite{Co,GK}. 
Our second result shows that all closures of positive arborescent tangles are realised as positive Thompson links.

\begin{corollary} \label{positive}
The set of positive Thompson links contains the set of arborescent links associated with weighted plane rooted trees with all strictly positive weights.
\end{corollary}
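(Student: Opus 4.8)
The plan is to deduce Corollary~\ref{positive} from Theorem~\ref{arborescent}: given a rooted plane tree $T$ all of whose weights are $\ge 1$, it suffices to exhibit a \emph{bipartite} arborescent tangle --- weights in $\{+1,-1\}$, opposite signs on neighbouring vertices, every $+1$-vertex of valency two --- whose closure is the arborescent link of $T$. I would obtain it by expanding $T$ locally, replacing each vertex by a small $\pm1$-weighted \emph{spider}, and then correcting the parities along the edges. The only thing to check about each local move is that it does not change the associated tangle, hence not the link; equivalently, that it is a composition of the elementary moves on weighted plane trees of Bonahon--Siebenmann~\cite{BS}.

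The basic building block is the following identity. A single vertex of weight $n\ge 1$ represents the integer tangle $n$, and so does the spider obtained from a central vertex of weight $-1$ by attaching $n+1$ arms, each arm being the length-four path carrying weights $+1,-1,+1,-1$ read outwards from the centre. A short continued-fraction computation shows that each arm contributes the value $1$ (as a rational tangle of fraction $1$ it may be absorbed into one extra unit of twist), so the spider has fraction $-1+(n+1)\cdot1=n$; every $+1$-vertex it contains is interior to an arm, hence of valency two, while the centre and the arm-endpoints are negative. The same spider, with $d$ of the centre's edges reattached to the $d$ neighbours of a weight-$n$, valency-$d$ vertex of $T$ and the remaining $n+1$ edges carrying the arms, records at its centre exactly the fraction recorded at that vertex. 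Performing this substitution simultaneously at every vertex of $T$ therefore changes no link.

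It remains to make the result bipartite at the seams. After the substitution every centre is negative, so two centres coming from an edge of $T$ are joined by an edge between like signs; I would repair this by joining centres not directly but through short interpolating strings of $\pm1$-vertices. Fixing the bipartition of $T$ into white and black vertices, along each edge I attach on the white side a single $+1$-vertex of valency two joined directly to its centre, and on the black side a $-1$-vertex reached from its centre through a $+1$-stub; across the resulting string the two ends carry opposite signs and all new $+1$-vertices have valency two. Such a string is not transparent: it applies a fixed M\"obius transformation to the fraction it transmits. I would absorb this once and for all by a compensating choice of the arms of the spiders --- replacing some length-four arms by longer alternating arms, or by arms that themselves contain a smaller spider (for instance there is a legitimate bipartite arm contributing the value $-1$) --- so that the fraction finally recorded at each centre again matches the one at the corresponding vertex of $T$, all the while keeping the planar cyclic order of the attachments. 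The closure of this bipartite arborescent tangle is then the arborescent link of $T$, which by Theorem~\ref{arborescent} is a positive Thompson link.

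The main obstacle, I expect, is exactly this last coordination. A parity count shows one cannot make an interpolating $\pm1$-string simultaneously sign-alternating and fraction-preserving, so the corrections must be pushed into the spiders; arranging the arm multiplicities so that every fraction in the whole tree comes out right, uniformly and compatibly with the valency-two constraint and the planar order --- and treating the root of $T$, where the closure is formed, with the same care --- is where the real bookkeeping lies.
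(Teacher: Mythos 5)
Your strategy coincides with the paper's: reduce to Theorem~\ref{arborescent} by rewriting a tree with strictly positive weights as a bipartite one through local substitutions, and your spider --- a weight~$-1$ centre with $n+1$ arms carrying $+1,-1,+1,-1$ read outwards --- is exactly the configuration the paper's algorithm produces at each vertex (it is the integer-tangle identity $n=(n+1)+(-1)$ followed by padding each pendant $+1$ with a terminal $-1,+1,-1$ string, as in Lemma~\ref{lemma2}). The genuine gap is the one you flag yourself and then do not close: the seams. You assert that a parity count forbids an interpolating $\pm1$-string that is simultaneously sign-alternating and fraction-preserving, and on that basis you defer the repair to an unspecified global compensation inside the spiders (``where the real bookkeeping lies''). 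That assertion is false, and the unperformed bookkeeping is precisely the missing content of the proof. Inserting the alternating string $\pm1,\mp1,\pm1$ into any edge of a weighted plane tree does not change the associated arborescent link: this is the paper's Lemma~\ref{lemma3}, proved by drawing the two tangles. What your fraction count misses is that the insertion also shifts by one the distance-to-root parity of every vertex beyond it, hence swaps the horizontal/vertical roles of all those twist regions; the three added crossings implement exactly the quarter-turn that compensates for this swap, so the nontrivial M\"obius transformation you compute is an artefact of retaining the old parities rather than an obstruction. Granting this lemma, the argument closes immediately: insert an alternating length-three string into every edge of $T$ so that adjacent centres are separated by $+1,-1,+1$, replace each weight-$a$ vertex by its $-1$ centre with $a+1$ pendant $+1$'s, and pad the pendant $+1$'s (and any remaining like-signed adjacencies, e.g.\ at surviving weight-$1$ vertices or at the root) with further such strings. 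As written, your proposal halts at its self-identified ``main obstacle'' without resolving it, so it does not constitute a proof.
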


The class of links associated with positive arborescent tangles contains all two-bridge knots, in particular positive and negative ones. In order to prevent confusion, we should point out that  positive/negative vertex signs do not necessarily stand for positive/negative crossings. The two examples shown at the top of Figure~\ref{fig1} illustrate the sign convention and demonstrate that the positive and negative trefoil knots are both realised as arborescent tangles with negative weights.
\begin{figure}[htb]
\begin{center}
\raisebox{-0mm}{\includegraphics[scale=0.8]{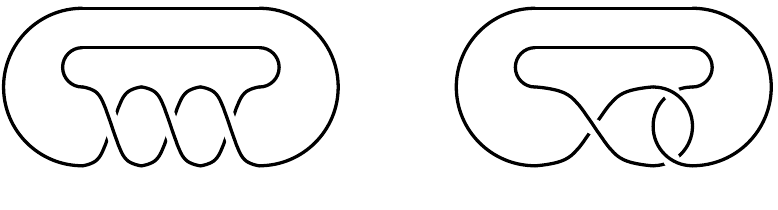}}
\[
\begin{tikzpicture}[x=.5cm, y=.5cm,
    every edge/.style={
        draw,
      postaction={decorate,
                    decoration={markings}
                   }
        }
]

%
%
%
%
%
%
%
%
%
%
%
 
 \node at (1.5,-2) {$\scalebox{.5}{$\,$}$};
 \node at (1.5,-1.5) {$\scalebox{.75}{$\star -3$}$};

\end{tikzpicture}
\qquad
\qquad\qquad
\begin{tikzpicture}[x=.5cm, y=.5cm,
    every edge/.style={
        draw,
      postaction={decorate,
                    decoration={markings}
                   }
        }
]


%
%
%
%
%
%
%
%
%
 
 \node at (1.5,-1.5) {$\scalebox{.5}{$\star$}$};
 \node at (1.4,-2) {$\scalebox{.75}{$-1$}$};
 \node at (2.75,-2) {$\scalebox{.75}{$-2$}$};
  \draw[thick] (1.7,-1.5)--(3,-1.5);

\end{tikzpicture}
\]

\bigskip
\bigskip
\raisebox{-0mm}{\includegraphics[scale=1.2]{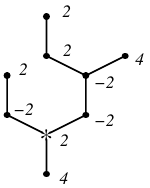}}
\quad
\raisebox{-7mm}{\includegraphics[scale=0.5]{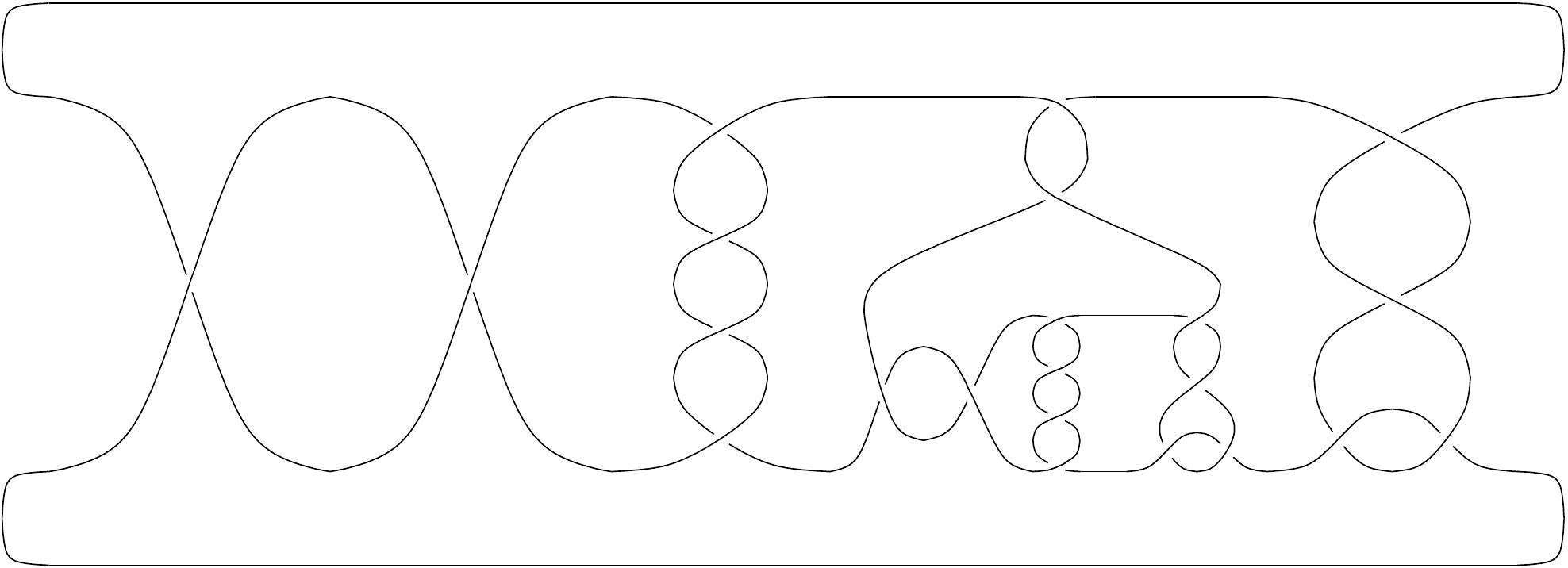}}

\caption{Arborescent knots}\label{fig1}
\end{center}
\end{figure}

While unoriented links are encoded in the Thompson group $F$, for oriented links one has to consider Jones' oriented subgroup $\vec{F}$ (\cite{J1}, see also \cite{V, ACJ}).
In a predecessor of this paper, we 
proved that links associated with positive elements of the oriented Thompson group are positive~\cite{AB}. The links there are a subfamily of the ones considered here. The examples of Figure~1 show that link positivity does not persist when dropping the orientability.

The proofs of Theorem~\ref{arborescent} and Corollary \ref{positive} are presented in Sections 2 and 3, respectively. 

\section{Arborescent diagrams and bipartite trees}

The Thompson group $F$ consists of all piecewise affine homeomorphisms of the unit interval with slopes being powers of $2$ and dyadic breakpoints. The action of such a homeomorphism can be stored by pairs of finite rooted planar 
binary trees with equal numbers of leaves
~\cite{CFP}. 
A tree with $2$ leaves is herein referred to as caret (see Figure \ref{fig3}). 
We usually denote   such pairs of trees by the symbol $\frac{T_+}{T_-}$ and  as customary, we draw a pair of trees in the plane with the tree $T_+$ upside down on top of the other $T_-$. 
The tree $T_+$ is called the top tree, while $T_-$ is the bottom tree.
Whenever two pairs of trees differ by a sequence of additions/deletions of pairs of opposing carets (see Figure \ref{oppcar}) they are said to be equivalent.
Thanks to this equivalence relation, the rule $\frac{T_+}{T}\cdot \frac{T}{T_-}=\frac{T_+}{T_-}$
defines the multiplication in $F$. 
The trivial element is represented by any pair $\frac{T}{T}$ and the inverse of $\frac{T_+}{T_-}$
is just $\frac{T_-}{T_+}$. 
Each finite rooted planar binary   tree may be obtained from the tree consisting of one vertex by repeated additions of carets. 
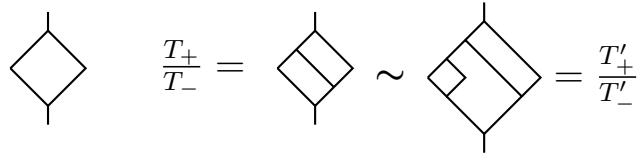
\begin{figure}[htb]
\begin{center}
\[
\begin{tikzpicture}[x=.5cm, y=.5cm,
    every edge/.style={
        draw,
      postaction={decorate,
                    decoration={markings}
                   }
        }
]

 \draw[thick] (1,0)--(2,1)--(3,0)--(2,-1)--(1,0); 
\draw[thick] (2,1)--(2,1.5);
\draw[thick] (2,-1)--(2,-1.5);
\node (bbb) at (-1,-2) {$\scalebox{1.25}{$\,$}$}; 

\end{tikzpicture}
\qquad 
\begin{tikzpicture}[x=.5cm, y=.5cm,
    every edge/.style={
        draw,
      postaction={decorate,
                    decoration={markings}
                   }
        }
]
\node (bbb) at (-1,0) {$\scalebox{1.25}{$\frac{T_+}{T_-}=$}$}; 

 \draw[thick] (1,0)--(2,1)--(3,0)--(2,-1)--(1,0); 
 \draw[thick] (1.5,0.5)--(2.5,-.5);
\draw[thick] (2,1)--(2,1.5);
\draw[thick] (2,-1)--(2,-1.5);

\node (bbb) at (-1,-2) {$\scalebox{1.25}{$\,$}$}; 

\end{tikzpicture}\, \,
 \begin{tikzpicture}[x=.5cm, y=.5cm,
    every edge/.style={
        draw,
      postaction={decorate,
                    decoration={markings}
                   }
        }
]
\node (bbb) at (4.5,0) {$\scalebox{1.25}{$=\frac{T_+'}{T_-'}$}$}; 
\node (bbb) at (-1,0) {$\scalebox{1.25}{$\thicksim$}$}; 

\draw[thick] (0,0)--(.5,.5)--(1,0)--(.5,-.5)--(0,0);
\draw[thick] (.5,0.5)--(1,1)--(2.5,-.5)--(1.5,-1.5)--(0.5,-.5);
\draw[thick] (1,1)--(1.5,1.5)--(3,0)--(2.5,-.5);
\draw[thick] (1.5,-1.5)--(1.5,-2);
\draw[thick] (1.5,1.5)--(1.5,2);

\end{tikzpicture}\]
\caption{A pair of opposing carets and two equivalent pairs of trees.}
\end{center}\label{oppcar}
\end{figure}

A few years ago Jones introduced a machinery that 
allows one to construct
unitary representations of the Thompson group \cite{J1, J16, AJ, ABC}.
This also led him  to define a link diagram out of 
such a pair of trees by gluing together a pair of tangles
 along a horizontal line.
Every link arises from his construction~\cite{J1}.  

We now review Jones's construction  with a simple example. 
Let~$\frac{T_+}{T_-}$ be a pair of rooted binary plane trees with $n$~leaves, for instance
\[
\begin{tikzpicture}[x=.5cm, y=.5cm,
    every edge/.style={
        draw,
      postaction={decorate,
                    decoration={markings}
                   }
        }
]
\node (bbb) at (-1,0) {$\scalebox{1.25}{$\frac{T_+}{T_-}=$}$}; 

 \draw[thick] (2,0)--(2.5,.5)--(3,0); 
\draw[thick] (2.5,0.5)--(3,1)--(4,0);
\draw[thick] (4,0)--(2.5,-1.5)--(1,0)--(2.5,1.5)--(3,1);
\draw[thick] (3,0)--(3.5,-.5);
\draw[thick] (2,0)--(3,-1);
\draw[thick] (2.5,1.5)--(2.5,1.75);
\draw[thick] (2.5,-1.5)--(2.5,-1.75);

\end{tikzpicture}
\]
We are going to associate a tangle to $T_+$. 
For this purpose 
we think of $T_+$ as sitting
in the upper-half plane, with leaves on the positive integers of the $x$-axis. 
First 
we turn the trivalent vertices into $4$-valent ones, then we turn the new $4$-valent vertices into crossings and extend the strand sprouting from the root until it meets the zero point on the $x$-axis. 
\[
\begin{tikzpicture}[x=.5cm, y=.5cm,
    every edge/.style={
        draw,
      postaction={decorate,
                    decoration={markings}
                   }
        }
]
\node (bbb) at (-1,0) {$\scalebox{1.25}{$T_+=$}$}; 

 \draw[thick] (2,0)--(2.5,.5)--(3,0); 
\draw[thick] (2.5,0.5)--(3,1)--(4,0);
\draw[thick] (1,0)--(2.5,1.5)--(3,1);
\draw[thick] (2.5,1.5)--(2.5,1.75);

\end{tikzpicture}\quad
\begin{tikzpicture}[x=.5cm, y=.5cm,
    every edge/.style={
        draw,
      postaction={decorate,
                    decoration={markings}
                   }
        }
]

 \draw[thick] (2,0)--(2.5,.5)--(3,0); 
 \draw[thick] (2.5,.5)--(2.5,0); 
 
\draw[thick] (2.5,0.5)--(3,1)--(4,0);
 \draw[thick] (3,1)--(3.5,0); 

\draw[thick] (1,0)--(2.5,1.5)--(3,1);
\draw[thick] (2.5,1.5)--(2.5,1.75);
\draw[thick] (2.5,1.5)--(1.5,0);

\node (bbb) at (-1,0) {$\scalebox{1.25}{$\mapsto$}$}; 

\end{tikzpicture}
\quad
 \begin{tikzpicture}[x=.5cm, y=.5cm,
    every edge/.style={
        draw,
      postaction={decorate,
                    decoration={markings}
                   }
        }
]
\node (bbb) at (-2,0) {$\scalebox{1.25}{$\mapsto$}$}; 
\node (bbb) at (-3,-.5) {$\scalebox{1.25}{$\,$}$}; 
\draw[thick] (2,0) to[out=90,in=90] (3,0);  
\draw[thick] (2.5,0.5) to[out=90,in=90] (4,0);  
\draw[thick] (1,0) to[out=90,in=90] (3.5,.75);  
\draw[thick] (0,0) to[out=90,in=90] (1.5,.95);

\draw[thick] (2.5,0)--(2.5,.2); 
\draw[thick] (3.5,0)--(3.5,.5); 
\draw[thick] (1.5,0)--(1.5,.6);

\end{tikzpicture}
\]
The tangle associated with $T_+$ is a union of $n$~half-circles in the upper half-plane with endpoints on the $x$-axis and with the innermost circles passing on top
(thus the tangle is alternating). 

 Repeat the same for $T_-$ in the lower half-plane. 
 \[
\begin{tikzpicture}[x=.5cm, y=.5cm,
    every edge/.style={
        draw,
      postaction={decorate,
                    decoration={markings}
                   }
        }
]
\node (bbb) at (-1,0) {$\scalebox{1.25}{$T_-=$}$}; 

\draw[thick] (4,0)--(2.5,-1.5)--(1,0);
\draw[thick] (3,0)--(3.5,-.5);
\draw[thick] (2,0)--(3,-1);
\draw[thick] (2.5,-1.5)--(2.5,-1.75);

\end{tikzpicture}\quad
\begin{tikzpicture}[x=.5cm, y=.5cm,
    every edge/.style={
        draw,
      postaction={decorate,
                    decoration={markings}
                   }
        }
]

\draw[thick] (4,0)--(2.5,-1.5)--(1,0);
\draw[thick] (3,0)--(3.5,-.5);
\draw[thick] (2,0)--(3,-1);

\draw[thick] (2.5,-1.5)--(2.5,-1.75);

\draw[thick] (2.5,-1.5)--(1.5,0);
\draw[thick] (3,-1)--(2.5,0);
 \draw[thick] (3.5,-.5)--(3.5,0);

\node (bbb) at (-1,0) {$\scalebox{1.25}{$\mapsto$}$}; 

\end{tikzpicture}\quad
 \begin{tikzpicture}[x=.5cm, y=.5cm,
    every edge/.style={
        draw,
      postaction={decorate,
                    decoration={markings}
                   }
        }
]
\node (bbb) at (-2,-0.25) {$\scalebox{1.25}{$\mapsto$}$}; 

\draw[thick] (3,0) to[out=-90,in=-90] (4,0);  
\draw[thick] (2,0) to[out=-90,in=-90] (3.5,-0.5);  
\draw[thick] (1,0) to[out=-90,in=-90] (3,-1);  
\draw[thick] (0,0) to[out=-90,in=-90] (2,-1.3);  

\draw[thick] (1.5,0)--(2,-1);  
\draw[thick] (2.5,0)--(3,-.7); 
\draw[thick] (3.5,0)--(3.5,-.2);

\end{tikzpicture}
\]
 Now pairs of trees with matching numbers of leaves give rise to pairs of tangles that can be glued together. 
 For example, for the element $\frac{T_+}{T_-}$ we get the following link 
\[
\begin{tikzpicture}[x=.5cm, y=.5cm,
    every edge/.style={
        draw,
      postaction={decorate,
                    decoration={markings}
                   }
        }
]

\node (bbb) at (-3,-.5) {$\scalebox{1.25}{$\,$}$}; 
\draw[thick] (2,0) to[out=90,in=90] (3,0);  
\draw[thick] (2.5,0.5) to[out=90,in=90] (4,0);  
\draw[thick] (1,0) to[out=90,in=90] (3.5,.75);  
\draw[thick] (0,0) to[out=90,in=90] (1.5,.95);  

\draw[dashed] (-.5,0)--(4.5,0); 

\draw[thick] (2.5,0)--(2.5,.2); 
\draw[thick] (3.5,0)--(3.5,.5); 
\draw[thick] (1.5,0)--(1.5,.6);  

\node (bbb) at (-3.5,0) {$\scalebox{1.25}{$\CL(T_+,T_-)=$}$}; 

\draw[thick] (3,0) to[out=-90,in=-90] (4,0);  
\draw[thick] (2,0) to[out=-90,in=-90] (3.5,-0.5);  
\draw[thick] (1,0) to[out=-90,in=-90] (3,-1);  
\draw[thick] (0,0) to[out=-90,in=-90] (2,-1.3);  

\draw[thick] (1.5,0)--(2,-1);  
\draw[thick] (2.5,0)--(3,-.7); 
\draw[thick] (3.5,0)--(3.5,-.2);

\end{tikzpicture}
\]
See 
Figure~\ref{fig2} for more examples.

In the case of positive elements, 
the bottom tree
can always be chosen to have the following shape 
\[
\begin{tikzpicture}[x=.5cm, y=.5cm,
    every edge/.style={
        draw,
      postaction={decorate,
                    decoration={markings}
                   }
        }
]

\draw[thick] (0,0)--(4,-4)--(8,0);
\draw[thick] (4.5,-3.5)--(1,0);
\draw[thick] (5,-3)--(2,0);
\draw[thick] (7,-1)--(6,0);
\draw[thick] (7.5,-.5)--(7,0);
\draw[thick] (4,-4)--(4,-4.5);


\node (aaaa) at (5,-1) {$\scalebox{1}{$\ldots$}$}; 

\end{tikzpicture}
\]
 which corresponds to a standard tangle, to be found in Figures~\ref{fig2} and~\ref{fig5}.
 For this reason 
the links produced by elements of $F_+$ are simply denoted by $\CL(T_+)$. 
\begin{figure}[htb]
\begin{center}
\raisebox{-0mm}{\includegraphics[scale=0.7]{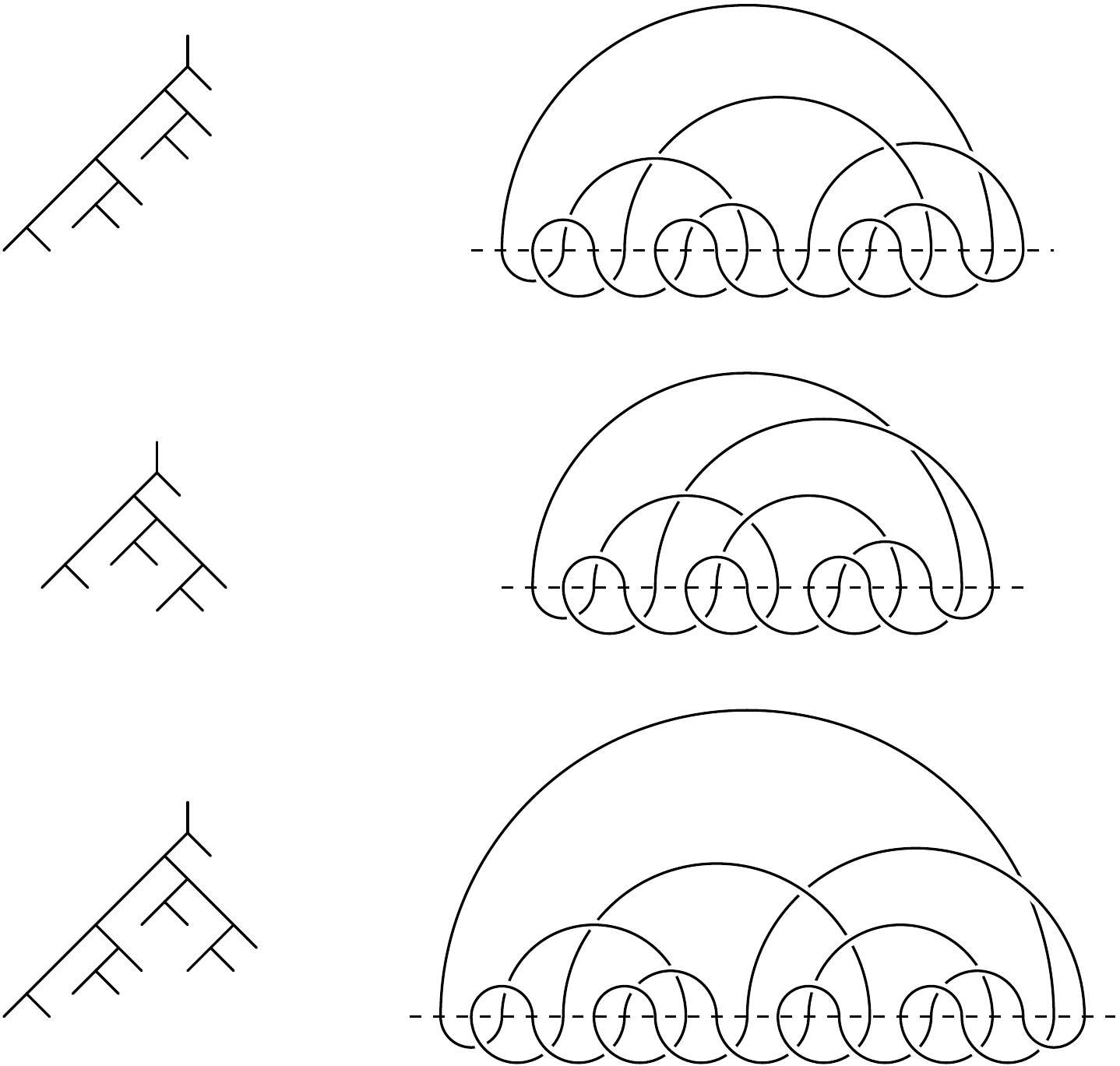}}
\caption{Positive Thompson knots $3_1,3_1^*,4_1$.}\label{fig2}
\end{center}
\end{figure}

In summary, the links associated with positive elements of the Thompson group are defined as special closures of tangles determined by a single rooted binary plane tree. As we can see from Figure~\ref{fig2}, the positive and negative trefoil knot $3_1$ and $3_1^*$, as well as the figure-eight knot $4_1$, arise from this construction. 

In the rest of this section, we explain why positive Thompson link diagrams are closures of arborescent tangles. The class of arborescent tangles is the minimal class of tangles closed under tangle composition, and containing all rational tangles. The latter are described by finite sequences of integers, viewed as the coefficients of a continued fraction expansion of a rational number~\cite{Co}. The closure of an arborescent tangle is described by a finite rooted plane tree with integer vertex weights. Each weight $w$ gives rise to a twist region with $|w|$ crossings. The orientation of these crossings, as well as the interconnections between these twist regions, are determined by the plane tree in the following way. The root vertex corresponds to a horizontal twist region, in which crossings are called positive if their strand going from the bottom left to the top right is above the other strand. 
If the weight is zero, then we have just two horizontal lines.
The vertices adjacent to the root vertex correspond to vertical twist regions attached to this horizontal twist region. The order in which they are attached is determined by the plane cyclic arrangement of the branches around the root vertex. We keep the convention that the overcrossing strand of a positive crossing is going from the bottom left to the top right. In the end, this means that arborescent tangles whose weights carry the same sign give rise to alternating links. The vertices at distance two from the root give again rise to horizontal twist regions, and so on. The three examples depicted in Figure~1 illustrate this construction. A more detailed definition can be found in~\cite{BS,Ga}.

Going back to positive Thompson links, let us start with the simplest positive element, represented by a binary top tree with just two leaves. The corresponding link diagram is a union of two overlapping circles, which we may interpret as the closure of an arborescent tangle in many ways. We choose quite an unusual interpretation, with two vertices of weight zero, corresponding to crossingless tangles, as shown in Figure~\ref{fig3}. The reason for this choice is the inductive argument, which will soon follow. More precisely, we will associate a planar, weighted tree to a binary tree so that there are two consecutive vertices of the planar tree associated to each trivalent vertex of the binary tree. The construction is inductive. 
 Indeed, the tree $T$ may be constructed from a single caret, by adding a sequence of carets   to it.
Correspondingly,  the weighted tree, may be obtained by adding to the weighted tree (depicted in Figure \ref{fig3}) a pair of vertices/edges for each caret.
In the next lemma, we describe how to add such vertices/edges.
\begin{figure}[htb]
\begin{center}
\raisebox{-0mm}{\includegraphics[scale=1.0]{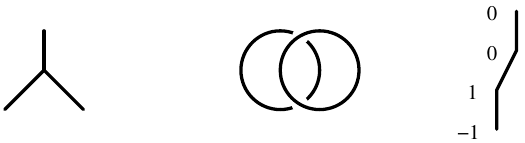}}
\caption{Basic arborescent tangle.}\label{fig3}
\end{center}
\end{figure}

\begin{lemma}\label{addcarets}
Let $T$ be a plane rooted binary tree and denote by $t$ the weighted tree corresponding to the link $\CL(T)$. 
By adding a caret below one of the leaves of $T$ we obtain a new tree $T'$ and correspondingly a new weighted $t'$.
The tree $t'$ may be obtained from $t$ in the following way depending on where we add the new caret 
\begin{itemize}
\item[a)] 
if we attach a   caret below  a left leaf of 
  $T$, two new vertices 
are added to $t$: one with weight $1$ and connected to a terminal vertex of weight $-1$, another one of weight $-1$   connected by one edge only to this new vertex of weight $1$,
\[
\begin{tikzpicture}[x=.35cm, y=.35cm,
    every edge/.style={
        draw,
      postaction={decorate,
                    decoration={markings}
                   }
        }
]
\node at (-1.75,0.5) {$\scalebox{1}{$\,$}$};


 \draw[thick] (1,1.5) -- (1,1);
\draw[thick] (0,0) -- (1,1)--(2,0);

\filldraw[fill=gray!40!white, draw=black] (1,0) -- (1.5,.5)--(2,0)--(1,0);

\end{tikzpicture}\;\;
\begin{tikzpicture}[x=.35cm, y=.35cm,
    every edge/.style={
        draw,
      postaction={decorate,
                    decoration={markings}
                   }
        }
]

\node at (-1.5,0.5) {$\scalebox{1}{$\mapsto$}$};

 \draw[thick] (2,2.5) -- (2,2);
\draw[thick] (0,0) -- (2,2)--(4,0);
 \draw[thick] (1,1) -- (2,0);

\filldraw[fill=gray!40!white, draw=black] (3,0) -- (3.5,.5)--(4,0)--(3,0);

\end{tikzpicture}
\]
\[
\begin{tikzpicture}[x=.35cm, y=.35cm,
    every edge/.style={
        draw,
      postaction={decorate,
                    decoration={markings}
                   }
        }
]

\node at (0,3.5) {$\scalebox{1}{$\vdots$}$};
\node at (-1.35,2) {$\scalebox{.75}{$-1/0$}$};
\node at (-1,1) {$\scalebox{.75}{$1$}$};
\node at (-1.35,0) {$\scalebox{.75}{$-1$}$};
 

 \draw[thick] (0,2) -- (0,0);
 \fill (0,0)  circle[radius=1.5pt];
\fill (0,1)  circle[radius=1.5pt];
\fill (0,2)  circle[radius=1.5pt];
 
\node at (2,2.5) {$\scalebox{1}{$\,$}$};

\end{tikzpicture}
\quad 
\begin{tikzpicture}[x=.35cm, y=.35cm,
    every edge/.style={
        draw,
      postaction={decorate,
                    decoration={markings}
                   }
        }
]

\node at (0,3.5) {$\scalebox{1}{$\vdots$}$};
\node at (-1.35,2) {$\scalebox{.75}{$-1/0$}$};
\node at (-1,1) {$\scalebox{.75}{$1$}$};
\node at (-1.35,0) {$\scalebox{.75}{$-1$}$};

\node at (-1,-1) {$\scalebox{.75}{$1$}$};
\node at (-1.35,-2) {$\scalebox{.75}{$-1$}$};


 \draw[thick] (0,2) -- (0,0);
\draw[thick, red] (0,0) -- (0,-2);
\fill (0,0)  circle[radius=1.5pt];
\fill (0,1)  circle[radius=1.5pt];
\fill (0,2)  circle[radius=1.5pt];

\fill[red] (0,-1)  circle[radius=1.5pt];
\fill[red] (0,-2)  circle[radius=1.5pt];

\node at (-3.5,0.5) {$\scalebox{1}{$\mapsto$}$};
\node at (2,2.5) {$\scalebox{1}{$,$}$};

\end{tikzpicture}
\]
where we put $-1/0$ to indicate that the vertices may have weight $-1$ or $0$ (the zero weights can occur only on the root and on the adjacent vertex).
\item[b)] 
if we attach a caret below a right-inner leaf
of $T$, two new vertices 
are added to $t$: one with weight $1$ and connected to a   vertex of weight $-1$, 
one with weight $-1$ connected by an edge
only to this new vertex of weight $1$, 
\[
\begin{tikzpicture}[x=.35cm, y=.35cm,
    every edge/.style={
        draw,
      postaction={decorate,
                    decoration={markings}
                   }
        }
]
\node at (-1.75,0.5) {$\scalebox{1}{$\,$}$};


 \draw[thick] (1,1.5) -- (1,1);
\draw[thick] (0,0) -- (1,1)--(2,0);

\filldraw[fill=gray!40!white, draw=black] (1,0) -- (.5,.5)--(0,0)--(1,0);

\end{tikzpicture}\;\;
\begin{tikzpicture}[x=.35cm, y=.35cm,
    every edge/.style={
        draw,
      postaction={decorate,
                    decoration={markings}
                   }
        }
]

\node at (-1.5,0.5) {$\scalebox{1}{$\mapsto$}$};

 \draw[thick] (2,2.5) -- (2,2);
\draw[thick] (0,0) -- (2,2)--(4,0);
 \draw[thick] (3,1) -- (2,0);

\filldraw[fill=gray!40!white, draw=black] (1,0) -- (.5,.5)--(0,0)--(1,0);

\end{tikzpicture}
\]
\[\qquad
\begin{tikzpicture}[x=.35cm, y=.35cm,
    every edge/.style={
        draw,
      postaction={decorate,
                    decoration={markings}
                   }
        }
]


\node at (0,3.5) {$\scalebox{1}{$\vdots$}$};
\node at (-1.35,2) {$\scalebox{.75}{$-1$}$};
\node at (-1,1) {$\scalebox{.75}{$1$}$};
\node at (-1.35,0) {$\scalebox{.75}{$-1$}$};


 \draw[thick] (0,2) -- (0,0);
 \fill (0,0)  circle[radius=1.5pt];
\fill (0,1)  circle[radius=1.5pt];
\fill (0,2)  circle[radius=1.5pt];

\end{tikzpicture}
\quad
\begin{tikzpicture}[x=.35cm, y=.35cm,
    every edge/.style={
        draw,
      postaction={decorate,
                    decoration={markings}
                   }
        }
]


\node at (0,3.5) {$\scalebox{1}{$\vdots$}$};
\node at (-1.35,2) {$\scalebox{.75}{$-1$}$};
\node at (-1,1) {$\scalebox{.75}{$1$}$};
\node at (-1.35,0) {$\scalebox{.75}{$-1$}$};

\node at (2,0) {$\scalebox{.75}{$-1$}$};
\node at (2.35,1) {$\scalebox{.75}{$1$}$};


 \draw[thick] (0,2) -- (0,0);
\draw[thick, red] (0,2) -- (1,1)--(1,0);
\fill (0,0)  circle[radius=1.5pt];
\fill (0,1)  circle[radius=1.5pt];
\fill (0,2)  circle[radius=1.5pt];

\fill[red] (1,1)  circle[radius=1.5pt];
\fill[red] (1,0)  circle[radius=1.5pt];

\node at (-3.5,0.5) {$\scalebox{1}{$\mapsto$}$};

\node at (3,2.5) {$\scalebox{1}{$,$}$};

\end{tikzpicture}
\]
%
%
%
%
%
%
%
%
%
%
%
\item[c)] 
if we attach a caret below the right-most leaf
of $T$, two new vertices 
are added to $t$: one with weight $1$ and connected to a   vertex of weight $0$, 
one with weight $-1$ connected by an edge
only to this new vertex of weight $1$, 
\[
\begin{tikzpicture}[x=.35cm, y=.35cm,
    every edge/.style={
        draw,
      postaction={decorate,
                    decoration={markings}
                   }
        }
]
\node at (-1.75,0.5) {$\scalebox{1}{$\,$}$};


 \draw[thick] (1,1.5) -- (1,1);
\draw[thick] (0,0) -- (1,1)--(2,0);
\filldraw[fill=gray!40!white, draw=black] (1,0) -- (.5,.5)--(0,0)--(1,0);

\node at (0,-1.25) {$\scalebox{1}{$\;$}$};

\node at (2,-.35) {$\scalebox{.75}{$\star$}$};

\end{tikzpicture}\;\;
\begin{tikzpicture}[x=.35cm, y=.35cm,
    every edge/.style={
        draw,
      postaction={decorate,
                    decoration={markings}
                   }
        }
]
\filldraw[fill=gray!40!white, draw=black] (1,0) -- (.5,.5)--(0,0)--(1,0);

\node at (-1.5,0.5) {$\scalebox{1}{$\mapsto$}$};

 \draw[thick] (2,2.5) -- (2,2);
\draw[thick] (0,0) -- (2,2)--(4,0);
 \draw[thick] (3,1) -- (2,0);

\node at (0,-1.25) {$\scalebox{1}{$\;$}$};

\node at (4,-.35) {$\scalebox{.75}{$\star$}$};

\end{tikzpicture}
\]
\[\qquad
\begin{tikzpicture}[x=.35cm, y=.35cm,
    every edge/.style={
        draw,
      postaction={decorate,
                    decoration={markings}
                   }
        }
]

\node at (-1,3) {$\scalebox{.75}{$0$}$};
\node at (-1,1) {$\scalebox{.75}{$1$}$};
\node at (-1.35,0) {$\scalebox{.75}{$-1$}$};
\node at (-1,2) {$\scalebox{.75}{$0$}$};



 \draw[thick] (0,3) -- (0,0);
 \fill (0,0)  circle[radius=1.5pt];
\fill (0,1)  circle[radius=1.5pt];
\fill (0,2)  circle[radius=1.5pt];
\fill (0,3)  circle[radius=1.5pt];

\node at (0,-.75) {$\scalebox{1}{$\vdots$}$};

\end{tikzpicture}
\quad
\begin{tikzpicture}[x=.35cm, y=.35cm,
    every edge/.style={
        draw,
      postaction={decorate,
                    decoration={markings}
                   }
        }
]

\node at (-1,3) {$\scalebox{.75}{$0$}$};
\node at (-1,1) {$\scalebox{.75}{$1$}$};
\node at (-1.35,0) {$\scalebox{.75}{$-1$}$};
\node at (-1,2) {$\scalebox{.75}{$0$}$};


\node at (2,0) {$\scalebox{.75}{$-1$}$};
\node at (2.35,1) {$\scalebox{.75}{$1$}$};


 \draw[thick] (0,3) -- (0,0);
\draw[thick, red] (0,2) -- (1,1)--(1,0);
\fill (0,0)  circle[radius=1.5pt];
\fill (0,1)  circle[radius=1.5pt];
\fill (0,2)  circle[radius=1.5pt];
\fill (0,3)  circle[radius=1.5pt];

\fill[red] (1,1)  circle[radius=1.5pt];
\fill[red] (1,0)  circle[radius=1.5pt];

\node at (0,-.75) {$\scalebox{1}{$\vdots$}$};

\node at (-3.5,0.5) {$\scalebox{1}{$\mapsto$}$};

\node at (3,2.5) {$\scalebox{1}{$,$}$};

\end{tikzpicture}
\]
where 
we  put the label $\star$ below the right-most leaf of the tree. 
\end{itemize}
Here the gray triangles represent 
an arbitrary subtree and the new edges are drawn in red.
\end{lemma}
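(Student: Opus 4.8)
The plan is to verify the statement by comparing the diagrams $\CL(T)$ and $\CL(T')$ inside a small disk, granting inductively -- the induction being on the number of carets used to build $T$ from a single caret -- that $\CL(T)$ is already presented as the closure of the arborescent tangle encoded by a weighted tree $t$ of the asserted shape, that is, all weights equal to $-1$ apart from a possible $0$ on the root and on its neighbour. The base case is the content of the discussion preceding the lemma: the single caret produces the diagram of two overlapping circles, which was presented there as the closure of the basic arborescent tangle of Figure~\ref{fig3}; so it suffices to analyse the effect of one caret move.

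First I would recall how Jones's tangle $\tau(T)$ is read off from $T$: with the leaves of $T$ at the points $1,\dots ,n$ of the $x$-axis, $\tau(T)$ is the union of nested half-circles in the upper half-plane, innermost on top, the half-circle attached to an internal vertex $v$ spanning the interval cut out by the leaves below $v$, and $\CL(T)$ is the closure of $\tau(T)$ against the standard bottom tangle. Adjoining a caret below a leaf $\ell$ replaces the point $\ell$ by a pair of adjacent points and shifts the rest of the configuration to the right, so $\tau(T')$ is obtained from $\tau(T)$ by inserting one new innermost half-circle in front of the strand terminating at $\ell$. In the closed picture this is a local modification in a disk that grafts on exactly one new vertical twist region of weight $\pm 1$ carrying one pendant horizontal twist region of weight $\pm 1$; that the vertical region has weight $+1$ and the pendant region weight $-1$, and not the reverse, is forced by the ``innermost on top'' rule together with the crossing-sign convention of Section~2, because $\tau(T')$ must again be alternating away from the two new vertices. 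In the dictionary with weighted trees this is precisely the addition of two new vertices of weights $1$ and $-1$ joined by a single edge, the weight-$1$ vertex being glued to the ambient tree.

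It remains to locate the point of attachment, which is where the three cases diverge and where the main work lies: the position of $\ell$ in $T$ determines which complementary region of the arborescent diagram of $\CL(T)$ the new clasp occupies, hence which vertex of $t$ the new weight-$1$ vertex is attached to and, through the plane cyclic order at that vertex, on which side. When $\ell$ is a left leaf, the terminating strand runs along a horizontal twist region realised by a \emph{terminal} weight-$(-1)$ vertex, and the new pair of vertices extends the tree beyond it (case~a); when $\ell$ is a right-inner leaf the same local picture occurs, but the weight-$(-1)$ vertex it meets already carries a branch coming from the subtree to the right of $\ell$, so it is not terminal (case~b); when $\ell$ is the right-most leaf the strand abuts the distinguished region marked $\star$, which by the base-case normalisation corresponds to a weight-$0$ vertex, and after the move the marker $\star$ passes to the new right-most leaf (case~c). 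The main obstacle is exactly this local isotopy-and-bookkeeping step: checking that inserting one innermost half-circle in $\tau(T)$ becomes, after an isotopy of the closed diagram, the grafting of a $+1$ vertical twist region with a pendant $-1$ horizontal twist region, at the predicted vertex and on the correct side of the plane tree, with the over/under and orientation conventions of Section~2 carried through consistently. Granting this, the induction closes immediately: any $T$ is assembled from a caret by a sequence of moves of types (a), (b), (c), the weighted tree is built by the corresponding vertex/edge additions, and a routine check confirms that the weights $0$ stay confined to the root and its neighbour.
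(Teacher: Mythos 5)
Your plan is correct and follows essentially the same route as the paper: the authors' proof is precisely the local pictorial verification you describe, drawing for each of the three cases the portion of the diagram affected by the new caret (one new innermost half-circle in the top tangle, closing up against one new crossing in the bottom tangle) and identifying it with the grafting of a weight-$1$ vertex carrying a pendant weight-$-1$ vertex at the predicted spot. The only difference is that the paper carries out the "main obstacle" you isolate by exhibiting the before/after tangle pictures explicitly rather than arguing in words.
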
    
\begin{proof}
We now give a pictorial proof of the three rules.
For all the three cases here follow the portions of the knot diagrams which are affected by the addition of the caret 
(first the link diagram before adding the caret, then the one after the addition\footnote{For a) we only draw the case where the weights of the last three vertices are $-1$, $1$, $-1$, the case where the first vertex has zero weight can proved in a similar manner}).
The same diagrams represent the the arborescent link described by the weighted trees drawn above.
\begin{eqnarray*}
%
%
%
%
%
%
%
%
\begin{tikzpicture}[x=.5cm, y=.5cm,
    every edge/.style={
        draw,
      postaction={decorate,
                    decoration={markings}
                   }
        }
]

\node at (2,0.1) {$\scalebox{.65}{$\rotatebox{45}{$\scalebox{0.7}{$\ddots$}$}$}$};

 \draw[thick] (1,0) to[out=90,in=90] (2,0.6);  

\draw[thick] (1,0) to[out=-90,in=-90] (2.5,-0.5);  
 \draw[thick] (2,-0.2) to[out=-90,in=-90] (3,0);  
\draw[thick] (1.5,-.8)--(1.5,-1.3);
 \draw[thick] (1.5,.8)--(1.5,1.4);
\draw[thick] (1.5,.5)--(1.5,-.4);
\draw[thick] (2.5,.1)--(2.5,-.1);

 \draw[thick] (1.75,.4)to[out=45,in=135](2.25,.4);
\draw[thick] (2,.3)--(2,.4);

\node at (-1.5,-0.5) {$\scalebox{1}{$a)$}$};

\node at (0,-1.2) {$\;$};

\end{tikzpicture}
&
\begin{tikzpicture}[x=.5cm, y=.5cm,
    every edge/.style={
        draw,
      postaction={decorate,
                    decoration={markings}
                   }
        }
]

\node at (2,0.1) {$\scalebox{.65}{$\rotatebox{45}{$\scalebox{0.7}{$\ddots$}$}$}$};

\draw[thick] (0,0) to[out=90,in=90] (1,0);  
\draw[thick] (.5,0.5) to[out=90,in=90] (2,0.7);  

\draw[thick] (1,0) to[out=-90,in=-90] (2.5,-0.5);  
\draw[thick] (0,0) to[out=-90,in=-90] (1.5,-.75);  
\draw[thick] (2,-0.2) to[out=-90,in=-90] (3,0);   

\draw[thick] (.5,.1)--(.5,-.5);
\draw[thick] (.5,-1.3)--(.5,-.9);
\draw[thick] (1.5,.9)--(1.5,-.5);
\draw[thick] (1.5,1.2)--(1.5,1.5);
\draw[thick] (2.5,.1)--(2.5,-.1);

\node at (-1.5,-0.5) {$\scalebox{1}{$\mapsto\; $}$};

 \draw[thick] (1.75,.5)to[out=45,in=135](2.25,.5);
 \draw[thick] (2,.5)--(2,.4);

\node at (0,-1.2) {$\;$};

\end{tikzpicture}\\
\begin{tikzpicture}[x=.5cm, y=.5cm,
    every edge/.style={
        draw,
      postaction={decorate,
                    decoration={markings}
                   }
        }
]

 \draw[thick] (1,0) to[out=90,in=90] (2,0);  

\draw[thick] (1,0) to[out=-90,in=-90] (2.5,-0.5);  
 \draw[thick] (2,0) to[out=-90,in=-90] (3,0);  
\draw[thick] (1.5,-.8)--(1.5,-1.3);
 \draw[thick] (1.5,.5)--(1.5,1.2);
\draw[thick] (1.5,.1)--(1.5,-.4);
\draw[thick] (2.5,.1)--(2.5,-.1);

\node at (-1.5,-0.5) {$\scalebox{1}{$b)$}$};

\node at (0,-1.2) {$\;$};
  \fill [color=white,opacity=1] (1.25,0.2) -- (-.5,0.2)--(-0.5,-.2)--(1.25,-.2)--(1.25,0.2);
  
\node at (1,0) {$\scalebox{.65}{$\rotatebox{0}{$\scalebox{0.7}{$\ldots$}$}$}$};

\end{tikzpicture}
&
\begin{tikzpicture}[x=.5cm, y=.5cm,
    every edge/.style={
        draw,
      postaction={decorate,
                    decoration={markings}
                   }
        }
]

\draw[thick] (0,0) to[out=90,in=90] (1.5,0.5);  
\draw[thick] (1,0) to[out=90,in=90] (2,0);  

\draw[thick] (1,0) to[out=-90,in=-90] (2.5,-0.5);  
\draw[thick] (0,0) to[out=-90,in=-90] (1.5,-.75);  
\draw[thick] (2,0) to[out=-90,in=-90] (3,0);  
\draw[thick] (.6,-.6)--(.6,.5);
\draw[thick] (.6,-1)--(.6,-1.4);
\draw[thick] (.6,.9)--(.6,1.4);
\draw[thick] (1.5,.1)--(1.5,-.4);
\draw[thick] (2.5,.1)--(2.5,-.1);

\node at (-1.5,-0.5) {$\scalebox{1}{$\mapsto\; $}$};

\node at (0,-1.2) {$\;$};
  \fill [color=white,opacity=1] (.25,0.2) -- (-.5,0.2)--(-0.5,-.2)--(.25,-.2)--(.25,0.2);

\node at (0,0) {$\scalebox{.65}{$\rotatebox{0}{$\scalebox{0.7}{$\ldots$}$}$}$};

\end{tikzpicture}\\
%
%
%
%
%
%
%
%
\begin{tikzpicture}[x=.5cm, y=.5cm,
    every edge/.style={
        draw,
      postaction={decorate,
                    decoration={markings}
                   }
        }
]

 \draw[thick] (0,0) to[out=90,in=90] (2,0);  

\draw[thick] (1,-1) to[out=-90,in=-90] (2,0);   

\draw[thick] (1.1,.4)--(1.25,-1);
\draw[thick] (1,.7)--(1,1.1);
\draw[thick] (1.25,-1.2)--(1.25,-1.8);

\node at (-2.5,-0.5) {$\scalebox{1}{$c)$}$};

\node at (0,-1.2) {$\;$};
\node at (0,-.5) {$\scalebox{.65}{$\rotatebox{0}{$\scalebox{0.7}{$\ldots$}$}$}$};

\end{tikzpicture}
& 
\begin{tikzpicture}[x=.5cm, y=.5cm,
    every edge/.style={
        draw,
      postaction={decorate,
                    decoration={markings}
                   }
        }
]

\draw[thick] (0,0) to[out=90,in=90] (1.5,0.5);  
\draw[thick] (1,0) to[out=90,in=90] (2,0);  

\draw[thick] (1,0) to[out=-90,in=-90] (2,0);  
\draw[thick] (0,-1) to[out=-90,in=-90] (1.5,-.5);   

\draw[thick] (1.5,.2)--(1.5,-.2);
\draw[thick] (.5,.4)--(.5,-1);
\draw[thick] (.5,.7)--(.5,1.1);
\draw[thick] (.5,-1.4)--(.5,-1.9);

\node at (-1.5,-0.5) {$\scalebox{1}{$\mapsto\; $}$};

\node at (0,-1.2) {$\;$};
\node at (0,-.5) {$\scalebox{.65}{$\rotatebox{0}{$\scalebox{0.7}{$\ldots$}$}$}$};

\end{tikzpicture}
%
%
%
%
%
%
%
%
%
%
\end{eqnarray*}
\end{proof}

An easy application of the previous lemma yields the following result.
\begin{proposition}\label{inclusion}
For every $g\in F_+$, the link $\CL(g)$   admits an arborescent description with two adjacent vertices of weight zero, one of which is the root, all other weights $\pm 1$,
and with all the vertices of weight $1$ having degree $2$.
\end{proposition}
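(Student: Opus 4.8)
The plan is to prove Proposition~\ref{inclusion} by induction on the number of carets of a fixed binary tree representative $T_+$ of the positive element $g$ (equivalently, on $n-1$ where $n$ is the number of leaves), using Lemma~\ref{addcarets} as the inductive engine. Throughout I would carry along the three required properties of the weighted tree as the inductive hypothesis: it has exactly two weight-zero vertices, these are adjacent and one of them is the root, all other weights are $\pm 1$, and every weight-$1$ vertex has degree $2$.

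First I would dispose of the base case. When $T_+$ is a single caret, $\CL(g)$ is the basic link of Figure~\ref{fig3}, whose weighted tree is a single edge joining two vertices, both of weight $0$, one of which is the root. There is no weight-$1$ vertex, so the degree condition is vacuous and all three properties hold.

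Next, the inductive step. Recalling that every binary tree with at least two leaves is obtained from the single caret by successively attaching carets below leaves, I would write $T_+=T'$ where $T'$ arises from a tree $T$ with one fewer caret by attaching a caret below some leaf $\ell$ of $T$, and assume the proposition holds for $\CL(T)$, with weighted tree $t$. The leaf $\ell$ is of exactly one of the three mutually exclusive types handled in Lemma~\ref{addcarets} — a left leaf, a right-inner leaf, or the right-most leaf — so the lemma applies and describes the weighted tree $t'$ of $\CL(T')=\CL(g)$ explicitly: one passes from $t$ to $t'$ by adjoining a new weight-$1$ vertex $v$ and a new weight-$(-1)$ vertex $w$, where $w$ is a new leaf joined only to $v$, and $v$ is joined to $w$ and to exactly one pre-existing vertex of $t$, whose weight is $-1$ (cases b) and part of a)) or $0$ (case c) and part of a)). From this presentation I would read off that the invariants persist: no old weight changes and no new zero weight is introduced, so $t'$ retains precisely the two adjacent zero-weight vertices of $t$, one of them the root, and all other weights are $\pm 1$; the new vertex $v$ has degree exactly $2$; and every old weight-$1$ vertex keeps its degree, because the single new edge incident to an old vertex lands on a vertex of weight $-1$ or $0$, never on a weight-$1$ vertex. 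Hence every weight-$1$ vertex of $t'$ has degree $2$, completing the induction.

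The substantive geometric work is already contained in Lemma~\ref{addcarets}; what remains is bookkeeping, and the main (minor) obstacle I anticipate is making the reduction from $T'$ to $T$ precise. One must note that a binary tree with at least two leaves always contains a caret both of whose children are leaves, remove such a caret to obtain $T$, and check that the removed caret sits below a leaf of $T$ of one of the three admissible types, so that Lemma~\ref{addcarets} can be invoked in the direction required by the induction; one should also keep track of the fact that zero weights can never proliferate beyond the root and its neighbour, which is exactly what the case analysis of the lemma guarantees.
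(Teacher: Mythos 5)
Your proof is correct and follows essentially the same route as the paper, which derives Proposition~\ref{inclusion} directly as "an easy application" of Lemma~\ref{addcarets} by induction on caret additions starting from the basic tangle of Figure~\ref{fig3}. Your explicit bookkeeping of the invariants (no new zero weights, new weight-$1$ vertex of degree $2$, old weight-$1$ vertices untouched) is exactly the verification the paper leaves implicit.
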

We observe that all crossings corresponding to vertices of weight $-1$ belong to the bottom tangle. 
A global example is shown in Figure~\ref{fig5}: the full binary tree with sixteen leaves, together with its link diagram and the arborescent tangle description obtained from the above procedure. From this example, we can guess an explicit arborescent tangle description for the Thompson links associated with full binary trees; they all represent the trivial link with two components.

\begin{figure}[htb]
\begin{center}
\raisebox{-0mm}{\includegraphics[scale=0.8]{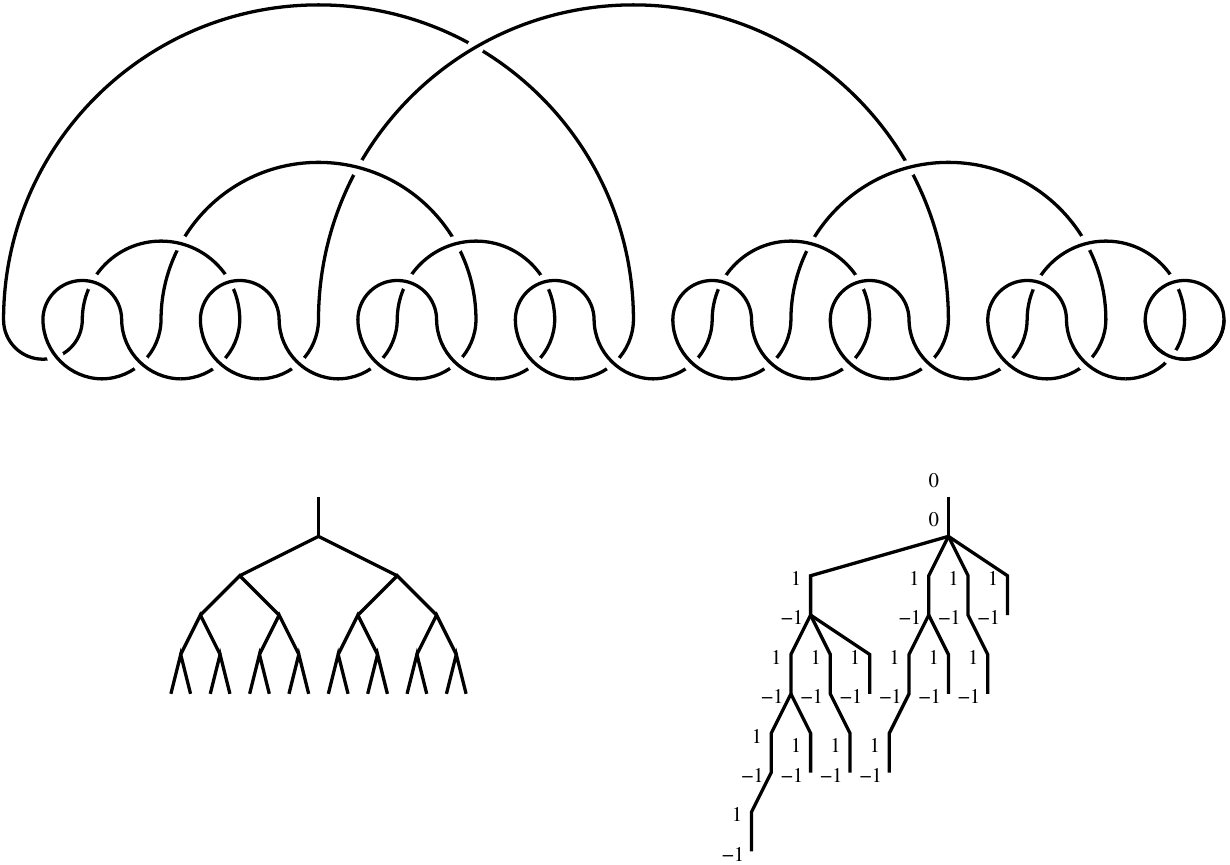}}
\caption{Full binary tree with sixteen leaves.}\label{fig5}
\end{center}
\end{figure}

Arborescent tangles are made up of finitely many twist regions that are wired in an arborescent pattern. In practice, these twist regions are chosen to be as large as possible. We will do the contrary, in order to 
realize the closures of all the arborescent tangles with only positive coefficients
(recall that these tangles are all alternating)  as positive Thompson links.
We call a finite rooted plane tree \textbf{bipartite}, if it carries a bipartite structure, encoded by vertex weights $\pm 1$, so that the root, as well as all leaves, carry the weight $-1$, and the vertices of weight $1$ have degree $2$. 

Our goal is to prove Theorem \ref{arborescent}. First we present the following lemma and then we will prove the first inclusion.
\begin{lemma}\label{lemma2}
For any $a\in\IN$, the following move does not affect the corresponding arborescent links 
\[
\begin{tikzpicture}[x=.35cm, y=.35cm,
    every edge/.style={
        draw,
      postaction={decorate,
                    decoration={markings}
                   }
        }
]

\node at (-1.5,0) {$\scalebox{1}{$\ldots$}$};
 \node at (0,-.75) {$\scalebox{.5}{$a$}$};
\node at (.75,-.75) {$\scalebox{.5}{$\pm 1$}$}; 
 \node at (1.75,-.75) {$\scalebox{.5}{$\mp1$}$}; 
\node at (2.75,-.75) {$\scalebox{.5}{$\pm 1$}$}; 
 

 \draw[thick] (-.5,0) -- (3,0);
 \fill (0,0)  circle[radius=1.5pt];
\fill (1,0)  circle[radius=1.5pt];
\fill (2,0)  circle[radius=1.5pt];
\fill (3,0)  circle[radius=1.5pt];

\end{tikzpicture}\begin{tikzpicture}[x=.35cm, y=.35cm,
    every edge/.style={
        draw,
      postaction={decorate,
                    decoration={markings}
                   }
        }
]

\node at (-3,0) {$\scalebox{1}{$\leftrightarrow$}$};

\node at (-1.5,0) {$\scalebox{1}{$\ldots$}$};
 \node at (0,-.75) {$\scalebox{.5}{$a$}$};
 

  \fill (0,0)  circle[radius=1.5pt];

 \draw[thick] (-.5,0) -- (0,0);

\end{tikzpicture}
\]
\end{lemma}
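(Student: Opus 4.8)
The plan is to recognise the pendant path with weights $\pm 1,\mp 1,\pm 1$ as a rational tangle inserted along a single edge of the plane tree, and to show that this rational tangle is, up to isotopy rel boundary, crossingless.

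First I would localise the move inside a ball $B$ which meets the rest of the arborescent diagram only along the edge joining the vertex of weight $a$ to the first vertex of the pendant path; everything outside $B$ — in particular the other branches indicated by the dots, as well as the twist region of the weight-$a$ vertex itself — is left untouched. Inside $B$ sits the rational tangle determined by the linear chain of weights $\varepsilon,-\varepsilon,\varepsilon$ with $\varepsilon=\pm 1$, read off as a continued fraction in the convention of~\cite{BS,Ga}. The crucial computation is the telescoping $-\varepsilon+1/\varepsilon=0$ followed by $\varepsilon+1/0=\infty$: it shows that this chain evaluates to one of the two crossingless tangles, so by Conway's classification of rational tangles it is isotopic, rel $\partial B$, to that crossingless tangle. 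Since such an isotopy can be run in either direction and the argument is uniform in the integer $a$, this would give the move $\leftrightarrow$ in both senses, in particular for every $a\in\mathbb{N}$.

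The step I expect to be the main obstacle is the bookkeeping of conventions. The twist regions along the pendant path alternate between horizontal and vertical, and the sign convention fixed in Section~2 must be tracked carefully through the continued-fraction evaluation so that the limiting crossingless tangle is the \emph{transparent} one (the one parallel to the edge to the weight-$a$ vertex, whose deletion changes nothing) and not the other crossingless tangle (which would cap off a strand or split off an unknot); equivalently, one wants to verify that absorbing the whole branch into a continued-fraction expansion based at the weight-$a$ vertex replaces $[a,\varepsilon,-\varepsilon,\varepsilon]$ by $[a]$.

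An alternative, entirely pictorial route — in the spirit of the proof of Lemma~\ref{addcarets} — would be to draw the three crossings of the pendant path directly in the link diagram, with the innermost one appearing as a kink at the leaf, and to remove them by Reidemeister moves (three curl removals together with the small loop they bound). This sidesteps continued fractions at the price of exhibiting the local picture as a figure, and I would include it if the convention check above turns out to be delicate.
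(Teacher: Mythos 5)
Your primary route is correct in outline but genuinely different from the paper's. The paper simply draws the two local tangle pictures (for the representative case $-1,1,-1$; the case $1,-1,1$ being analogous) and observes that the three extra crossings cancel by an isotopy supported near the pendant branch — this is exactly your ``alternative, entirely pictorial route''. Your main argument instead evaluates the pendant chain as a rational tangle via the continued fraction $[\varepsilon,-\varepsilon,\varepsilon]$, telescoping $-\varepsilon+1/\varepsilon=0$ and then $\varepsilon+1/0=\infty$, and invokes Conway's classification to replace the branch by a crossingless tangle. That is a legitimate and more conceptual argument, and it makes transparent \emph{why} the move works ($\varepsilon=\pm1$ are the units for which the telescoping degenerates). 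Its cost is precisely the point you flag yourself: the classification only tells you the branch is isotopic to \emph{one of} the two crossingless tangles, and the lemma is true only if it is the transparent one; with the wrong parity of horizontal/vertical twist regions the same computation would instead cap off the $a$-twist region or split off an unknot, and the lemma would fail. You defer that convention check rather than carrying it out, so as written the argument is not quite closed; the paper's picture is, in effect, exactly that verification. Either finish the bookkeeping (fix the convention of \cite{BS} for which end of the chain is the horizontal twist region and confirm the fraction of the slot is matched) or fall back on the explicit local diagram, which settles the question at a glance.
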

\begin{proof}
We only consider the case where the last 3 vertices have weights -1, 1, -1; the case 1, -1, 1
 is analogous.
The claim follows at once by drawing the corresponding tangles
\[
\begin{tikzpicture}[x=.5cm, y=.5cm,
    every edge/.style={
        draw,
      postaction={decorate,
                    decoration={markings}
                   }
        }
]


\draw[thick] (-3,4.5)--(-5,4.5)--(-5,2.5)--(-3,2.5)--(-3,4.5);
\draw[thick] (-5,4)--(-5.5,4);
\draw[thick] (-5,3)--(-5.5,3);


\draw[thick] (-3,3)--(-2,3);
\draw[thick] (-3,4)--(-2,4); 

\draw[thick] (-1,4)--(-2,4);
\draw[thick] (-1,3)--(-1,1);
 \draw[thick] (-2,3) to[out=-90,in=180] (-1,0);  
 \draw[thick] (0,3) to[out=-90,in=90] (2.25,1);

\draw[thick] (-1,4)--(0,3);
\draw[thick] (-1,3)--(-.7,3.3); 
\draw[thick] (-.3,3.7)--(0,4); 


\draw[thick] (0,1)--(1.25,1);
\draw[thick] (0,0) to[out=-45,in=-135] (1.25,0);  
 
\draw[thick] (-1,0)--(0,1);
\draw[thick] (-1,1)--(-.7,.7); 
\draw[thick] (-.3,.3)--(0,0); 

\draw[thick] (1.25,1)--(2.25,0);
\draw[thick] (1.25,0)--(1.55,.3); 
\draw[thick] (1.95,.7)--(2.25,1);

 \node at (-4,3.5) {$\scalebox{1}{$a$}$};

\end{tikzpicture}
\quad 
\begin{tikzpicture}[x=.5cm, y=.5cm,
    every edge/.style={
        draw,
      postaction={decorate,
                    decoration={markings}
                   }
        }
]

 \node at (-6.5,3.5) {$\scalebox{1}{$=$}$};


\draw[thick] (-3,4.5)--(-5,4.5)--(-5,2.5)--(-3,2.5)--(-3,4.5);
\draw[thick] (-5,4)--(-5.5,4);
\draw[thick] (-5,3)--(-5.5,3);

\draw[thick] (-3,3)--(-2.5,3);  
\draw[thick] (-3,4)--(-2.5,4);  

\node at (3,-0.5) {$\scalebox{1}{$\;$}$};
\node at (-4,3.5) {$\scalebox{1}{$a$}$};
 
\end{tikzpicture}
\]
\end{proof}

\begin{proposition}
The closure of every arborescent tangle associated with a finite rooted plane bipartite tree is realised as a link of a positive element of the Thompson group.
\end{proposition}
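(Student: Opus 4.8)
The plan is to prove this by induction on the number of vertices of weight $+1$ in the bipartite tree $t$, building up a positive element of $F$ caret by caret via Lemma~\ref{addcarets} so that the weighted tree of the resulting Thompson link realizes the arborescent link of $t$; the move of Lemma~\ref{lemma2} will be used to absorb the discrepancy between a genuine bipartite tree, whose root and leaves carry weight $-1$, and the weighted trees produced by the construction of this section, which (by Proposition~\ref{inclusion}) always carry weight $0$ on the root and on its unique neighbour. The base case is a minimal bipartite tree (a single vertex of weight $-1$, or a short path), for which the arborescent link is directly realized by a positive element of $F$, the simplest instance being the one-caret element of Figure~\ref{fig3}.

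For the inductive step, let $t$ be a bipartite tree with at least one vertex of weight $+1$. Every leaf $\ell$ of $t$ other than the root has a parent $v$ of weight $+1$, and since vertices of weight $+1$ have degree $2$, the only child of $v$ is $\ell$; thus $(v,\ell)$ is a pendant pair of weights $(+1,-1)$ attached to a vertex $u$ of $t$ of weight $-1$, or, in the special case, $u$ is the root. Deleting $v$ and $\ell$ yields a bipartite tree $t'$ with fewer vertices of weight $+1$: if $u$ has a neighbour other than its parent it is unchanged apart from losing this branch, and if $v$ was its only child then $u$ becomes a leaf of weight $-1$ (or the one-vertex tree when $u$ is the root) — in all cases still bipartite. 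By the induction hypothesis the arborescent link of $t'$ is $\CL(T')$ for a plane rooted binary tree $T'$; fix a weighted tree $s'$ in the image of the construction applied to $T'$ that represents this link, keeping track of the bijection between the leaves of $T'$ and the vertices of $s'$.

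It remains to add one caret to $T'$. By Lemma~\ref{addcarets}, adding a caret below a left leaf, a right-inner leaf, or the right-most leaf of $T'$ grafts a new $(+1,-1)$ pendant pair onto, respectively, a terminal weight-$(-1)$ vertex of $s'$, an interior weight-$(-1)$ vertex of $s'$, or the weight-$0$ vertex adjacent to the root of $s'$. The vertex $u$ belongs to exactly one of these three types (terminal, interior, or root), and I would choose the leaf of $T'$ below which to add the caret so that the new pendant is grafted onto the vertex of $s'$ corresponding to $u$, in the correct plane position; the matching of plane positions uses that the cyclic order of the branches around a vertex of an arborescent tangle equals the left-to-right order of the corresponding leaves of the binary tree. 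When $u$ is the root of $t$, the construction attaches the pendant to the weight-$0$ neighbour of the root of $s'$ rather than to a weight-$(-1)$ root, and Lemma~\ref{lemma2} (inserting or deleting a length-$3$ alternating pendant path) is what guarantees that this substitution does not change the arborescent link. The resulting binary tree $T$ then satisfies that $\CL(T)$ has the arborescent link of $t$, completing the induction.

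The step I expect to be the main obstacle is the bookkeeping in the last paragraph: maintaining throughout a faithful correspondence between the leaves of the evolving binary tree and the vertices of the evolving weighted tree, so that deleting a pendant pair and then adding it back reproduces $t$ exactly (and not merely a bipartite tree with the same link), and, above all, handling the root layer, where the construction is forced to use vertices of weight $0$ — this is precisely why Lemma~\ref{lemma2} is established beforehand. A further point to check carefully is that the planar embedding, i.e. the cyclic order of the branches at each vertex of $t$, is genuinely reproduced by the choice of caret position, so that all bipartite plane rooted trees, not only all abstract bipartite trees, are obtained.
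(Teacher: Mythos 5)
Your overall strategy is the same as the paper's: build the binary tree caret by caret so that the rules of Lemma~\ref{addcarets} assemble the prescribed weighted tree, and use Lemma~\ref{lemma2} to reconcile the weight-$(-1)$ root of a bipartite tree with the two weight-$0$ vertices that the construction of Proposition~\ref{inclusion} forces at the top. You organize this as an induction on pendant $(+1,-1)$ pairs, whereas the paper argues directly; that difference is cosmetic. Two remarks, though. First, your induction hypothesis is too weak for the bookkeeping you yourself flag: knowing only that the link of $t'$ equals $\CL(T')$ for \emph{some} $T'$ gives you no control over which leaf of $T'$ corresponds to the vertex $u$, so you cannot in general choose where the new caret grafts its pendant. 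The statement that actually inducts is the stronger one the paper implicitly uses: for every weighted tree $t$ of the standard form (root $0$, adjacent vertex $0$, all other weights $\pm1$, weight-$1$ vertices of degree $2$) there is a binary tree $T$ whose associated weighted tree \emph{is} $t$, with the leaf-to-vertex correspondence and plane positions under control. With that hypothesis your inductive step goes through.

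Second, and more substantively, your treatment of the root case does not work as stated. When $u$ is the root of $t$ you propose to hang its branches directly off the weight-$0$ vertex adjacent to the root of the standard form and to invoke Lemma~\ref{lemma2} for the "substitution" of a weight-$(-1)$ root by a weight-$0$ vertex. But a twist region with $0$ crossings carrying $k$ branches is not the same tangle as one with one negative crossing carrying $k$ branches, and Lemma~\ref{lemma2} says nothing about changing the weight of a vertex: it only allows appending or deleting a \emph{pendant} alternating path of weights $\pm1,\mp1,\pm1$. The paper's fix is different: it leaves the bipartite root, its weight $-1$, and all of its branches untouched, and instead grows a path of weights $1,-1,1$ out of that root (this is the legitimate application of Lemma~\ref{lemma2}), then caps it with two weight-$0$ vertices (crossingless twist regions, hence harmless), declaring the far end to be the new root. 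The old root then sits as an ordinary weight-$(-1)$ vertex deeper in the tree, where rules a) and b) of Lemma~\ref{addcarets} can attach all of its branches, and rule c) is used only once, for the single edge descending from the weight-$0$ layer. You should replace your root step by this maneuver.
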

\begin{proof}
Bipartite trees have root with weight $-1$, while the roots of the weighted trees associated with arborescent links constructed from elements of $F_+$ have weight $0$.
Starting from a bipartite tree, first we manipulate the bipartite tree in such a way to obtain a weighted tree whose
 corresponding link is  (up to isotopy) the same, but it has a new root of weight $0$  connected only to another vertex of weight $0$, which in turn is connected to a vertex whose weight is $1$.
\[\begin{tikzpicture}[x=.35cm, y=.35cm,
    every edge/.style={
        draw,
      postaction={decorate,
                    decoration={markings}
                   }
        }
]


\node at (0.5,-.75) {$\scalebox{1}{$\vdots$}$};
\node at (-.5,2) {$\scalebox{.75}{$-1$}$};
\node at (-1,1) {$\scalebox{.75}{$1$}$};
\node at (-1.35,0) {$\scalebox{.75}{$-1$}$};

\node at (2,0) {$\scalebox{.75}{$-1$}$};
\node at (2.35,1) {$\scalebox{.75}{$1$}$};


 \draw[thick] (0.5,2) -- (-.5,1)-- (-.5,0);
\draw[thick] (0.5,2) -- (1.5,1)--(1.5,0);
\fill (-0.5,0)  circle[radius=1.5pt];
\fill (-0.5,1)  circle[radius=1.5pt];
\fill (0.5,2)  circle[radius=1.5pt];

\fill (1.5,1)  circle[radius=1.5pt];
\fill (1.5,0)  circle[radius=1.5pt];

\node at (0.5,.5) {$\scalebox{1}{$\ldots$}$};

\end{tikzpicture}
\quad
\begin{tikzpicture}[x=.35cm, y=.35cm,
    every edge/.style={
        draw,
      postaction={decorate,
                    decoration={markings}
                   }
        }
]


\node at (0.5,-.75) {$\scalebox{1}{$\vdots$}$};
\node at (0.5,.5) {$\scalebox{1}{$\ldots$}$};
\node at (-.5,2) {$\scalebox{.75}{$-1$}$};
\node at (-.2,3) {$\scalebox{.75}{$1$}$};
\node at (-.5,4) {$\scalebox{.75}{$-1$}$};
\node at (-.2,5) {$\scalebox{.75}{$1$}$};
\node at (-1,1) {$\scalebox{.75}{$1$}$};
\node at (-1.35,0) {$\scalebox{.75}{$-1$}$};

\node at (2,0) {$\scalebox{.75}{$-1$}$};
\node at (2.35,1) {$\scalebox{.75}{$1$}$};


 \draw[thick] (0.5,2) -- (-.5,1)-- (-.5,0);
\draw[thick] (0.5,2) -- (1.5,1)--(1.5,0);
\fill (-0.5,0)  circle[radius=1.5pt];
\fill (-0.5,1)  circle[radius=1.5pt];
\fill (0.5,2)  circle[radius=1.5pt];

\fill (1.5,1)  circle[radius=1.5pt];
\fill (1.5,0)  circle[radius=1.5pt];

\fill (0.5,2)  circle[radius=1.5pt];
\fill (0.5,3)  circle[radius=1.5pt];
\fill (0.5,4)  circle[radius=1.5pt];
\fill (0.5,5)  circle[radius=1.5pt];
 
\node at (-3.5,0.5) {$\scalebox{1}{$\mapsto\; $}$};
 \draw[thick] (.5,2)--(.5,5);

\end{tikzpicture}
\quad
\begin{tikzpicture}[x=.35cm, y=.35cm,
    every edge/.style={
        draw,
      postaction={decorate,
                    decoration={markings}
                   }
        }
]


\node at (0.5,.5) {$\scalebox{1}{$\ldots$}$};

\node at (0.5,-.75) {$\scalebox{1}{$\vdots$}$};
\node at (-.5,2) {$\scalebox{.75}{$-1$}$};
\node at (-.2,3) {$\scalebox{.75}{$1$}$};
\node at (-.5,4) {$\scalebox{.75}{$-1$}$};
\node at (-.2,5) {$\scalebox{.75}{$1$}$};
\node at (-.2,6) {$\scalebox{.75}{$0$}$};
\node at (-.2,7) {$\scalebox{.75}{$0$}$};

\node at (-1,1) {$\scalebox{.75}{$1$}$};
\node at (-1.35,0) {$\scalebox{.75}{$-1$}$};

\node at (2,0) {$\scalebox{.75}{$-1$}$};
\node at (2.35,1) {$\scalebox{.75}{$1$}$};


 \draw[thick] (.5,2)--(.5,7);

 \draw[thick] (0.5,2) -- (-.5,1)-- (-.5,0);
\draw[thick] (0.5,2) -- (1.5,1)--(1.5,0);
\fill (-0.5,0)  circle[radius=1.5pt];
\fill (-0.5,1)  circle[radius=1.5pt];
\fill (0.5,2)  circle[radius=1.5pt];

\fill (1.5,1)  circle[radius=1.5pt];
\fill (1.5,0)  circle[radius=1.5pt];

\fill (0.5,2)  circle[radius=1.5pt];
\fill (0.5,3)  circle[radius=1.5pt];
\fill (0.5,4)  circle[radius=1.5pt];
\fill (0.5,5)  circle[radius=1.5pt];
\fill (0.5,6)  circle[radius=1.5pt];
\fill (0.5,7)  circle[radius=1.5pt];

\node at (-5.5,0.5) {$\scalebox{1}{$\mapsto\; $}$};
\node at (-3.5,0.5) {$\scalebox{1}{$t= $}$};

\end{tikzpicture}
\]
The first step does not affect the corresponding link thanks to Lemma \ref{lemma2}.
The second step clearly does not change the link.
 Now  repeated applications of Lemma \ref{addcarets} produce an element $g$ of $F_+$ whose corresponding link $\CL(g)$ is the same as the arborescent link associated with $t$. 
Indeed, starting from the tree with two leaves (which is depicted in Figure \ref{fig3}) we may add carets to the tree with $2$ leaves in such a way that
the corresponding weighted tree is $t$.
\end{proof}
We now prove that the converse inclusion holds.
\begin{proposition}
Every positive Thompson link is the closure of an arborescent tangle associated with a finite rooted plane bipartite tree.
\end{proposition}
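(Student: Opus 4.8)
The plan is to run the induction of Lemma~\ref{addcarets} in reverse. By Proposition~\ref{inclusion}, every positive Thompson link $\CL(g)$ admits an arborescent description by a weighted tree $t$ with two adjacent vertices of weight $0$ (one of them the root), all other weights $\pm 1$, and all vertices of weight $1$ of degree $2$. Call such a weighted tree \emph{$F_+$-standard}. The task is to produce, from any $F_+$-standard tree, a finite rooted plane \emph{bipartite} tree (root and leaves of weight $-1$, vertices of weight $1$ of degree $2$) representing the same link. The obstruction to $t$ being already bipartite is twofold: the root and its neighbour carry weight $0$ rather than $-1$, and there may be terminal vertices of weight $+1$ (leaves of weight $+1$ are forbidden in a bipartite tree). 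I would address these two defects separately.

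First I would deal with the two weight-$0$ vertices at the root. A vertex of weight $0$ in an arborescent tangle is a crossingless twist region, i.e.\ two parallel strands; the local picture is exactly the one appearing on the right-hand side of the $c)$-move of Lemma~\ref{addcarets} and in the final step of the previous proposition. I would observe that contracting a weight-$0$ vertex (deleting it and identifying its two neighbouring twist regions) does not change the link: horizontally stacking a zero-twist region is the identity operation on tangles. Doing this for the root and its weight-$0$ neighbour leaves a tree whose new root is the old ``$1$''-vertex that was attached to them. If that vertex has weight $+1$, I can instead absorb the two $0$'s into it in the reverse direction, turning the chain $0 - 0 - 1 - (-1) - \cdots$ into $(-1) - 1 - (-1) - \cdots$ by reading the left column of the $c)$-picture: this is precisely Lemma~\ref{lemma2} run backwards together with the trivial removal of zero-twist vertices. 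The net effect is a weighted tree with root of weight $-1$, all weights $\pm 1$, and weight-$1$ vertices of degree $2$ — the same link.

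Second I would remove terminal vertices of weight $+1$. A weight-$1$ vertex has degree $2$, so if it is terminal it is the endpoint of a pendant path $\cdots - (-1) - 1$; its free end is a vertical twist region with one crossing whose outer strand closes up trivially, so this crossing can be undone by a Reidemeister~I move (equivalently: a pendant weight-$1$ twist region caps off a Hopf-type clasp that unknots against the dead end). After the move the path ends at the adjacent $-1$-vertex, which now is a leaf of weight $-1$, as required; if that $-1$-vertex was itself degree-$2$ we may have created a new terminal $+1$ further up, but the tree is finite so the process terminates. Performing these reductions exhaustively produces a bipartite tree whose closure is isotopic to $\CL(g)$, completing the proof together with the previous proposition.

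The main obstacle I anticipate is bookkeeping the plane (cyclic) structure and the degree condition while performing these local surgeries: one must check that contracting the weight-$0$ vertices and pruning terminal $+1$-vertices never raises the degree of a surviving weight-$1$ vertex above $2$, and that the root ends up with weight $-1$ in \emph{every} configuration of the two initial zeros (in particular when the $1$-vertex beneath them already has two other neighbours). This is a finite case check driven by the three cases of Lemma~\ref{addcarets}, and I would present it, as the authors do elsewhere, by drawing the handful of local tangle pictures and invoking Reidemeister~I and the zero-twist identity; no genuinely new idea beyond Lemmas~\ref{addcarets} and~\ref{lemma2} should be needed.
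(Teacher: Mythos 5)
There is a genuine gap, and it sits exactly where you defer to a ``finite case check''. The paper's proof does \emph{not} work only with the weighted tree of Proposition~\ref{inclusion}; its first and essential move is to change the \emph{representative of $g$}: it adds pairs of opposing carets along the right-hand spine of $(t_+,t_-)$ so that, after re-running Lemma~\ref{addcarets}, the two weight-$0$ vertices form a pendant path (degrees $1$ and $2$) at the root. This is needed because in the tree produced by Lemma~\ref{addcarets} the weight-$0$ vertex adjacent to the root is exactly the vertex to which case~c) attaches a new branch, so its degree equals one plus the number of carets hanging on the rightmost leaf of $T_+$ and is unbounded. In that configuration your surgery breaks down: ``contracting'' the two zeros leaves several weight-$1$ children with no admissible root ($-1$ and the degree-$2$ condition cannot both be met by promoting one of them), and your fallback of rewriting $0-0-1-(-1)-\cdots$ as $(-1)-1-(-1)-\cdots$ presupposes a chain. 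You flag this case yourself but do not resolve it, and it is not a finite check: it is the whole difficulty, and the missing idea is the caret-stuffing move on the pair of trees. Once that move is made, the paper finishes by discarding the now-pendant zeros and trimming the remaining terminal $\pm1,\mp1,\pm1$ chain with Lemma~\ref{lemma2}, which is close to what you intend.

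A secondary error: your removal of a terminal weight-$+1$ vertex by a Reidemeister~I move is not a valid operation on arborescent trees. A single pendant $\pm1$ twist region generally changes the link (compare the trees for the trefoils in Figure~\ref{fig1}); only the pendant chain of \emph{three} alternating $\pm1$'s of Lemma~\ref{lemma2} is removable, precisely because $\pm1+\tfrac{1}{\mp1+\tfrac{1}{\pm1}}$ degenerates. Fortunately this step is also unnecessary: Proposition~\ref{inclusion} already guarantees that every weight-$1$ vertex has degree $2$, so no terminal $+1$ vertices occur. The net assessment is that your outline identifies the right target (root weight and the zeros) but lacks the one genuinely new ingredient of the paper's argument and substitutes an invalid local move for one of the paper's correct ones.
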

\begin{proof}
Given an element $g\in F_+$, 
let $(t_+,t_-)$ be the minimal pair of trees representing $g$.
It is easy to check that applying the transformation depicted below does not affect the corresponding positive Thompson link
\[
\begin{tikzpicture}[x=.35cm, y=.35cm,
    every edge/.style={
        draw,
      postaction={decorate,
                    decoration={markings}
                   }
        }
]
\node at (-1.5,0) {$\scalebox{1}{$\frac{t_+}{t_-}=$}$};


 \draw[thick] (1,1.5) -- (1,1);
\draw[thick] (0,0) -- (1,1)--(2,0);
\filldraw[fill=gray!40!white, draw=black] (2,0) -- (1,1)--(0,0)--(2,0);

 \draw[thick] (1,-1.5) -- (1,-1);
\draw[thick] (0,0) -- (1,-1)--(2,0);

\node at (0,-2.5) {$\scalebox{1}{$\;$}$};

 \node at (1,-.25) {$\scalebox{.5}{$\ldots$}$};

\end{tikzpicture}\;\;
\begin{tikzpicture}[x=.35cm, y=.35cm,
    every edge/.style={
        draw,
      postaction={decorate,
                    decoration={markings}
                   }
        }
]
\filldraw[fill=gray!40!white, draw=black] (2,0) -- (1,1)--(0,0)--(2,0);

\node at (-1.5,0) {$\scalebox{1}{$\mapsto$}$};

 \draw[thick] (3,0) -- (1.5,1.5);
 \draw[thick] (4,0) -- (2,2);
\draw[thick] (2.5,3) -- (2.5,2.5);

\draw[thick] (0,0) -- (2.5,2.5)--(5,0);

\draw[thick] (0,0) -- (2.5,-2.5)--(5,0);
\draw[thick] (2.5,-3) -- (2.5,-2.5);
 \draw[thick] (4,0) -- (4.5,-.5);
 \draw[thick] (3,0) -- (4,-1);
 \draw[thick] (2,0) -- (3.5,-1.5);

\node at (0,-1.25) {$\scalebox{1}{$\;$}$};

 \node at (1.25,-.25) {$\scalebox{.5}{$\ldots$}$};

\end{tikzpicture}
\]
where the gray triangle represents the rest of the tree.
Thanks to this move, we may assume that in the tree constructed by means of Lemma \ref{addcarets} 
 the two vertices of weight 0 have degree 1 and 2. Below the corresponding tree is displayed, along with a bipartite tree yielding the same link (see Lemma \ref{lemma2}).
\[
\begin{tikzpicture}[x=.35cm, y=.35cm,
    every edge/.style={
        draw,
      postaction={decorate,
                    decoration={markings}
                   }
        }
]

\node at (0.5,-.75) {$\scalebox{1}{$\vdots$}$};
\node at (-.5,2) {$\scalebox{.75}{$-1$}$};
\node at (-.2,3) {$\scalebox{.75}{$1$}$};
\node at (-.5,4) {$\scalebox{.75}{$-1$}$};
\node at (-.2,5) {$\scalebox{.75}{$1$}$};
\node at (-.2,6) {$\scalebox{.75}{$0$}$};
\node at (-.2,7) {$\scalebox{.75}{$0$}$};

\node at (-.2,1) {$\scalebox{.75}{$1$}$};
\node at (-.5,0) {$\scalebox{.75}{$-1$}$};
 

 \draw[thick] (.5,0)--(.5,7);

 \fill (0.5,0)  circle[radius=1.5pt];
\fill (0.5,1)  circle[radius=1.5pt];
\fill (0.5,2)  circle[radius=1.5pt];

\fill (0.5,2)  circle[radius=1.5pt];
\fill (0.5,3)  circle[radius=1.5pt];
\fill (0.5,4)  circle[radius=1.5pt];
\fill (0.5,5)  circle[radius=1.5pt];
\fill (0.5,6)  circle[radius=1.5pt];
\fill (0.5,7)  circle[radius=1.5pt];

\end{tikzpicture}
\quad
\begin{tikzpicture}[x=.35cm, y=.35cm,
    every edge/.style={
        draw,
      postaction={decorate,
                    decoration={markings}
                   }
        }
]

\node at (0.5,-.75) {$\scalebox{1}{$\vdots$}$};
\node at (-.5,2) {$\scalebox{.75}{$-1$}$};
\node at (-.2,3) {$\scalebox{.75}{$1$}$};
\node at (-.5,4) {$\scalebox{.75}{$-1$}$};
\node at (-.2,5) {$\scalebox{.75}{$1$}$}; 

\node at (-.2,1) {$\scalebox{.75}{$1$}$};
\node at (-.5,0) {$\scalebox{.75}{$-1$}$};
 

 \draw[thick] (.5,0)--(.5,5);

 \fill (0.5,0)  circle[radius=1.5pt];
\fill (0.5,1)  circle[radius=1.5pt];
\fill (0.5,2)  circle[radius=1.5pt];

\fill (0.5,2)  circle[radius=1.5pt];
\fill (0.5,3)  circle[radius=1.5pt];
\fill (0.5,4)  circle[radius=1.5pt];
\fill (0.5,5)  circle[radius=1.5pt];

\node at (-2.5,1) {$\scalebox{1}{$\mapsto$}$};

\end{tikzpicture}
\quad
\begin{tikzpicture}[x=.35cm, y=.35cm,
    every edge/.style={
        draw,
      postaction={decorate,
                    decoration={markings}
                   }
        }
]

\node at (0.5,-.75) {$\scalebox{1}{$\vdots$}$};
\node at (-.5,2) {$\scalebox{.75}{$-1$}$};

\node at (-.2,1) {$\scalebox{.75}{$1$}$};
\node at (-.5,0) {$\scalebox{.75}{$-1$}$};
 

 \draw[thick] (.5,0)--(.5,2);

 \fill (0.5,0)  circle[radius=1.5pt];
\fill (0.5,1)  circle[radius=1.5pt];
\fill (0.5,2)  circle[radius=1.5pt];

\fill (0.5,2)  circle[radius=1.5pt];

\node at (-2.5,1) {$\scalebox{1}{$\mapsto$}$};

\end{tikzpicture}
\]
\end{proof}

\section{Trees with positive signs}
The aim of this section is to prove Corollary \ref{positive}.
Our strategy is to show that the links associated with plane rooted trees with all strictly positive weights, that is all the vertices have weight positive and non-zero, can also be realised by bipartite trees. 
We give a proof by induction on the number of vertices of the tree by presenting an explicit algorithm.

We start with the simplest tree, a single vertex with a positive weight~$k$. The corresponding tangle, a single twist region with~$k$ crossings, is realised by the rooted plane bipartite tree depicted on the left of Figure~\ref{fig7}. The three trees in that figure represent the same tangle. Here
the first equality is a consequence of Lemma \ref{lemma2}, the second one is obvious.
\begin{figure}[htb]
\begin{center}
\raisebox{-0mm}{\includegraphics[scale=1.0]{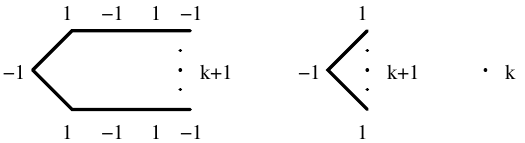}}
\caption{Realising a single twist region with~$k$ crossings.}\label{fig7}
\end{center}
\end{figure}


Suppose now that the number of vertices is strictly larger than $1$.
%
%
%
%
%
%
%
%
%
%
%
%
%
%
%
%
%
The \textbf{first step} is to subdivide each edge into four parts by inserting 3 vertices, two of weight -1 and one of weight 1. In the following lemma we show that this does not affect the corresponding link.
\begin{lemma}\label{lemma3}
For any $a, b\in\IN$, the following move does not affect the corresponding arborescent links 
\[
\begin{tikzpicture}[x=.35cm, y=.35cm,
    every edge/.style={
        draw,
      postaction={decorate,
                    decoration={markings}
                   }
        }
]

\node at (-1,0) {$\scalebox{1}{$\ldots$}$};
\node at (5,0) {$\scalebox{1}{$\ldots$}$};
\node at (0,-.75) {$\scalebox{.5}{$a$}$};
\node at (.75,-.75) {$\scalebox{.5}{$\pm 1$}$}; 
 \node at (1.75,-.75) {$\scalebox{.5}{$\mp1$}$}; 
\node at (3,-.75) {$\scalebox{.5}{$\pm 1$}$}; 
 \node at (4,-.75) {$\scalebox{.5}{$b$}$}; 


 \draw[thick] (0,0) -- (4,0);
 \fill (0,0)  circle[radius=1.5pt];
\fill (1,0)  circle[radius=1.5pt];
\fill (2,0)  circle[radius=1.5pt];
\fill (3,0)  circle[radius=1.5pt];
\fill (4,0)  circle[radius=1.5pt];

\end{tikzpicture}\begin{tikzpicture}[x=.35cm, y=.35cm,
    every edge/.style={
        draw,
      postaction={decorate,
                    decoration={markings}
                   }
        }
]

\node at (-3,0) {$\scalebox{1}{$\leftrightarrow$}$};

\node at (-1,0) {$\scalebox{1}{$\ldots$}$};
\node at (2,0) {$\scalebox{1}{$\ldots$}$};
\node at (0,-.75) {$\scalebox{.5}{$a$}$};
\node at (1,-.75) {$\scalebox{.5}{$b$}$}; 
 

 \draw[thick] (0,0) -- (1,0);
 \fill (0,0)  circle[radius=1.5pt];
\fill (1,0)  circle[radius=1.5pt];

\end{tikzpicture}
\]
%
%
%
%
%
%
%
%
%
%
%
%
%
%
%
%
%
\end{lemma}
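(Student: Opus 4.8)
The plan is to reduce this to Lemma \ref{lemma2} by a two-stage picture-manipulation argument, exactly in the spirit of the earlier proofs in this section. The move in Lemma \ref{lemma2} inserts a ``tail'' $\pm1,\mp1,\pm1$ at the \emph{end} of a horizontal twist-region chain (i.e.\ below a leaf of the underlying tree), and shows it can be cancelled. Here the three new vertices $\pm1,\mp1,\pm1$ are inserted into the \emph{middle} of an edge, between two twist regions of weights $a$ and $b$, so the new vertices are not terminal. The key observation is that the inserted chain $\pm1,\mp1,\pm1$ is itself alternating between the two signs, so by the arborescent tangle conventions recalled in Section 2 the corresponding sub-tangle is a short alternating rational tangle; the claim is that, as an unoriented tangle with its two horizontal boundary strands, it is isotopic to a pair of horizontal strands (a ``$0$-tangle''), whence splicing it between the $a$-region and the $b$-region does not change the closure.

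First I would draw the tangle associated with the three-vertex chain $\pm1,\mp1,\pm1$ together with the edges joining it to the $a$-region on the left and the $b$-region on the right, using the convention that consecutive opposite signs along a horizontal path alternate between horizontal and vertical twist regions. Concretely, the middle vertex of weight $\mp1$ is a single-crossing vertical twist region flanked by two single-crossing horizontal twist regions of the opposite sign; the two flanking crossings are ``undone'' against the middle crossing by a Reidemeister~II move once one slides the strands past each other, leaving a trivial connecting tangle. This is precisely the local picture already drawn in the proof of Lemma~\ref{lemma2}, now with an extra strand entering from the right (the $b$-region) instead of a free end; since that extra strand only rides along passively, the same sequence of isotopies applies verbatim. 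I would present this as a single equality of tangle pictures, as was done for Lemma~\ref{lemma2}.

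There is one bookkeeping point to handle: I must check that the sign pattern really does alternate as claimed, i.e.\ that inserting $a,\,\pm1,\,\mp1,\,\pm1,\,b$ keeps the diagram a legitimate arborescent diagram (the twist regions alternate horizontal/vertical along the edge), and that the $a$-region on the left and the $b$-region on the right reconnect correctly after the cancellation — in particular that the $a$-region and $b$-region end up directly adjacent, joined by a single edge, matching the right-hand side of the displayed move. Because $a$ and $b$ are arbitrary natural numbers this must be argued uniformly in $a,b$; but the cancellation is entirely local to the three inserted vertices and does not touch the interiors of the $a$- and $b$-twist regions, so uniformity is automatic. I would also remark, as in Lemma~\ref{lemma2}, that only the pattern $-1,1,-1$ need be treated, the pattern $1,-1,1$ being obtained by the mirror/sign-reversal symmetry of the construction. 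The main (very mild) obstacle is purely draughtsmanship: producing a tikz picture that makes the Reidemeister~II cancellation visually unambiguous while the passive $b$-strand is present; there is no conceptual difficulty beyond what Lemma~\ref{lemma2} already settled.
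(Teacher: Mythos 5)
Your proposal matches the paper's proof: the paper likewise treats only one sign pattern (the $-1,1,-1$ insertion) and disposes of the claim by simply drawing the two tangles and observing that the three inserted crossings cancel by a local isotopy, leaving the $a$-region joined directly to the $b$-region. The only caveat worth noting is that in the paper's picture the $b$-region comes back rotated by a quarter turn (consistent with $a$ and $b$ becoming adjacent, hence of opposite horizontal/vertical type, rather than being joined by a literal $0$-tangle of two parallel horizontal strands), but this bookkeeping point resolves itself once the picture you describe is actually drawn.
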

\begin{proof}
We only consider the case where we add the the sequence -1, 1, -1. 
The claim follows at once by drawing the corresponding tangles
\[
\begin{tikzpicture}[x=.5cm, y=.5cm,
    every edge/.style={
        draw,
      postaction={decorate,
                    decoration={markings}
                   }
        }
]


\draw[thick] (-3,4.5)--(-5,4.5)--(-5,2.5)--(-3,2.5)--(-3,4.5);
\draw[thick] (-5,4)--(-5.5,4);
\draw[thick] (-5,3)--(-5.5,3);


\draw[thick] (-3,3)--(-2,3);
\draw[thick] (-3,4)--(-2,4); 

\draw[thick] (-1,4)--(-2,4);
\draw[thick] (-1,3)--(-1,1);
 \draw[thick] (-2,3) to[out=-90,in=180] (-1,0);  
 \draw[thick] (0,3) to[out=-90,in=90] (2.25,1);

\draw[thick] (-1,4)--(0,3);
\draw[thick] (-1,3)--(-.7,3.3); 
\draw[thick] (-.3,3.7)--(0,4); 

\draw[thick] (1,0)--(2.5,0)--(2.5,-1.5)--(1,-1.5)--(1,0);
\draw[thick] (-5,4)--(-5.5,4);
\draw[thick] (-5,3)--(-5.5,3);

\draw[thick] (0,1)--(1.25,1);
\draw[thick] (0,0)--(0,-2);
\draw[thick] (0,-2) to[out=-90,in=-90] (1.25,-1.5);  
\draw[thick] (2.25,-1.5)--(2.25,-2);

\draw[thick] (-1,0)--(0,1);
\draw[thick] (-1,1)--(-.7,.7); 
\draw[thick] (-.3,.3)--(0,0); 

\draw[thick] (1.25,1)--(2.25,0);
\draw[thick] (1.25,0)--(1.55,.3); 
\draw[thick] (1.95,.7)--(2.25,1);

\node at (1.75,-0.75) {$\scalebox{1}{$b$}$};
\node at (-4,3.5) {$\scalebox{1}{$a$}$};

\end{tikzpicture}
\quad 
%
%
%
%
%
%
%
%
%
\begin{tikzpicture}[x=.5cm, y=.5cm,
    every edge/.style={
        draw,
      postaction={decorate,
                    decoration={markings}
                   }
        }
]

 \node at (-6.5,3.5) {$\scalebox{1}{$=$}$};


\draw[thick] (-3,4.5)--(-5,4.5)--(-5,2.5)--(-3,2.5)--(-3,4.5);
\draw[thick] (-5,4)--(-5.5,4);
\draw[thick] (-5,3)--(-5.5,3);

\draw[thick] (-2,4.5)--(0,4.5)--(0,2.5)--(-2,2.5)--(-2,4.5);
\draw[thick] (0,4)--(.5,4);
\draw[thick] (0,3)--(.5,3);  

\draw[thick] (-3,3)--(-2,3);  
\draw[thick] (-3,4)--(-2,4);  

\node at (3,-0.5) {$\scalebox{1}{$\;$}$};
\node at (-4,3.5) {$\scalebox{1}{$a$}$};
\node at (-1,3.5) {$\scalebox{1}{$\rotatebox{90}{$\scalebox{1}{$b$}$}$}$};

\end{tikzpicture}
\]
\end{proof}
In the \textbf{second step} we need to replace the vertices whose weight is strictly greater than 1
by suitable bipartite trees.
\begin{lemma}
Any 
vertex of weight $a\in\IN$ can be replaced by 
\[
\begin{tikzpicture}[x=.35cm, y=.35cm,
    every edge/.style={
        draw,
      postaction={decorate,
                    decoration={markings}
                   }
        }
]

\node at (-1,0) {$\scalebox{1}{$\ldots$}$};
\node at (3,0) {$\scalebox{1}{$\ldots$}$};
\node at (0,.5) {$\scalebox{.5}{$1$}$};
\node at (1,.5) {$\scalebox{.5}{$a$}$}; 
 \node at (2,.5) {$\scalebox{.5}{$1$}$};  

 \draw[thick] (0,0) -- (2,0);
 \fill (0,0)  circle[radius=1.5pt];
\fill[red] (1,0)  circle[radius=1.5pt];
\fill (2,0)  circle[radius=1.5pt];

 \node at (2,-2.75) {$\scalebox{.5}{$\;$}$};

\end{tikzpicture}\begin{tikzpicture}[x=.35cm, y=.35cm,
    every edge/.style={
        draw,
      postaction={decorate,
                    decoration={markings}
                   }
        }
]

\node at (-3,0) {$\scalebox{1}{$\leftrightarrow$}$};

\node at (-1,0) {$\scalebox{1}{$\ldots$}$};
\node at (3,0) {$\scalebox{1}{$\ldots$}$};
\node at (0,.5) {$\scalebox{.5}{$1$}$};
\node at (1,.5) {$\scalebox{.5}{$-1$}$}; 
 \node at (2,.5) {$\scalebox{.5}{$1$}$};  

 \draw[thick] (0,0) -- (2,0);
 \draw[thick] (0,-1) -- (1,0);
 \draw[thick] (.5,-1) -- (1,0);
 \draw[thick] (2,-1) -- (1,0);
 \fill (0,0)  circle[radius=1.5pt];
\fill[red] (1,0)  circle[radius=1.5pt];
\fill (2,0)  circle[radius=1.5pt];
 
\fill (0,-1)  circle[radius=1.5pt];
\fill (.5,-1)  circle[radius=1.5pt];
\fill (2,-1)  circle[radius=1.5pt];
 \node at (0,-1.5) {$\scalebox{.5}{$1$}$};  
 \node at (.5,-1.5) {$\scalebox{.5}{$1$}$};  
 \node at (2,-1.5) {$\scalebox{.5}{$1$}$};  
 \node at (1,-2.75) {$\scalebox{.5}{$a+1$}$};

\draw [decorate,decoration={brace,amplitude=2pt,mirror},xshift=1pt,yshift=0pt] (-.2,-2) -- (2,-2) node [black,midway,xshift=0.2cm] {\footnotesize $\;$};
\node at (1.25,-0.8) {$\scalebox{.6}{$\ldots$}$};

\end{tikzpicture}
\]
without affecting the corresponding link.
\end{lemma}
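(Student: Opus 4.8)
The plan is to prove this exactly as Lemmas~\ref{lemma2} and~\ref{lemma3} were proved, and as the three trees of Figure~\ref{fig7} were compared: draw the two arborescent tangles attached to the left-hand and right-hand weighted trees and exhibit an isotopy between them. In fact this lemma is just the relative version of the single-vertex reduction of Figure~\ref{fig7}. First I would record the two pictures. On the left one gets a horizontal twist region with $a$ crossings, whose two left-hand strand ends feed the neighbouring weight-$1$ (vertical) tangle and whose two right-hand ends feed the other weight-$1$ tangle. On the right one gets a single crossing --- the weight-$(-1)$ vertex, of the opposite sign --- onto which $a+1$ vertical one-crossing tangles are plumbed, flanked again by the same two weight-$1$ tangles, which are unchanged.

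The engine of the proof is the arithmetic identity $-1+(a+1)=a$, realised geometrically. I would absorb the pendant weight-$1$ leaves into the central twist region one at a time: each such absorption is a Reidemeister~II slide that pushes the extra crossing into the region and raises its crossing count by one. After all $a+1$ absorptions the central region has become a twist region with $-1+(a+1)=a$ crossings carrying the two flanking weight-$1$ tangles at its two ends --- which is precisely the left-hand tangle. Alternatively one can induct on~$a$: the base case is a single explicit picture, and the inductive step is the local move ``one extra crossing in the twist region $\leftrightarrow$ one extra pendant unit leaf at the central vertex'', itself a one-move picture, or a special instance of Lemmas~\ref{lemma2} and~\ref{lemma3}.

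The hard part will not be the topology but the bookkeeping of signs and of the plane structure. Because a positive vertex weight does not in general correspond to positive crossings --- the point the authors stress with the trefoil examples --- one has to verify that the crossing produced at the central vertex is genuinely of weight $-1$ and that the $a+1$ pendant clasps genuinely carry weight $+1$, i.e.\ that the crossing cancelled against the central one really forms a $\pm$ pair; and one has to track the cyclic order of the branches around the central vertex so that, after the absorptions, the two weight-$1$ neighbours come out attached to the two ends (rather than some interior gap) of the resulting $a$-crossing region. These are the only places where care is needed.
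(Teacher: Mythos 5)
Your proposal is correct and follows essentially the same route as the paper, which simply draws the two tangles and observes that the central weight-$(-1)$ crossing cancels against one of the $a+1$ pendant unit clasps by a Reidemeister~II move, realising $-1+(a+1)=a$. The only quibble is terminological: of your $a+1$ ``absorptions'' only one is a genuine Reidemeister~II cancellation (the one that annihilates the negative crossing), while the remaining $a$ are planar isotopies rotating a vertical one-crossing clasp into the horizontal twist region; the net bookkeeping and the resulting picture agree with the paper's.
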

\begin{proof}
In the figure below we represent both tangles
\[
\begin{tikzpicture}[x=.5cm, y=.5cm,
    every edge/.style={
        draw,
      postaction={decorate,
                    decoration={markings}
                   }
        }
]


\draw[thick] (0,0)--(1,1);
\draw[thick] (0,1)--(.3,.7); 
\draw[thick] (.7,.3)--(1,0);

\draw[thick] (1,0)--(2,1);
\draw[thick] (1,1)--(1.3,.7); 
\draw[thick] (1.7,.3)--(2,0); 

\node at (3,0.5) {$\scalebox{1}{$\ldots$}$};

\draw[thick] (4,0)--(5,1);
\draw[thick] (4,1)--(4.3,.7); 
\draw[thick] (4.7,.3)--(5,0); 

 \node at (2.5,-1) {$\scalebox{1}{$a$}$};  

\node at (6,0.5) {$\scalebox{1}{$\ldots$}$};
\node at (-1,0.5) {$\scalebox{1}{$\ldots$}$};

\draw [decorate,decoration={brace,amplitude=2pt,mirror},xshift=1pt,yshift=0pt] (-.2,-.2) -- (5,-.2) node [black,midway,xshift=0.2cm] {\footnotesize $\;$};
 
\end{tikzpicture}
\quad
\begin{tikzpicture}[x=.5cm, y=.5cm,
    every edge/.style={
        draw,
      postaction={decorate,
                    decoration={markings}
                   }
        }
]

 \node at (-3.5,0.5) {$\scalebox{1}{$=\; $}$};


\draw[thick] (-1,0)--(0,1);
\draw[thick] (-1,1)--(-.7,.7); 
\draw[thick] (-.3,.3)--(0,0); 

\draw[thick] (0,0)--(1,1);
\draw[thick] (0,1)--(.3,.7); 
\draw[thick] (.7,.3)--(1,0);

\draw[thick] (3,0)--(4,1);
\draw[thick] (3,1)--(3.3,.7); 
\draw[thick] (3.7,.3)--(4,0); 

\node at (2,0.5) {$\scalebox{1}{$\ldots$}$};
\node at (6,0.5) {$\scalebox{1}{$\ldots$}$};
\node at (-2,0.5) {$\scalebox{1}{$\ldots$}$};

\draw[thick] (4,1)--(5,0);
\draw[thick] (4,0)--(4.3,.3); 
\draw[thick] (4.7,.7)--(5,1); 

 \node at (2.5,-1) {$\scalebox{1}{$a+1$}$};

\draw [decorate,decoration={brace,amplitude=2pt,mirror},xshift=1pt,yshift=0pt] (-1.2,-.2) -- (4,-.2) node [black,midway,xshift=0.2cm] {\footnotesize $\;$};
 
\end{tikzpicture}
\]
\end{proof}

Finally if there are two consecutive vertices of weight 1 (this happens whenever some of the vertices of the original tree have weight 1) insert  3 vertices of weight -1, 1, -1 (this does not affect the corresponding link thanks to Lemma \ref{lemma3}).
The same sequence can be attached to all leaves with sign +1, leaving us with a bipartite tree.
This concludes the proof of Corollary \ref{positive}.

\section*{Acknowledgements}
We thank the referee for his or her particularly attentive perusal of the manuscript, which resulted in many improvements in the presentation of the results of this paper.
The authors acknowledge the support by the Swiss National Science foundation through the SNF project no. 178756 (Fibred links, L-space covers and algorithmic knot theory).

\end{document}